\newtheorem{theorem}{Theorem}[section]
\newtheorem{lemma}[theorem]{Lemma}
\newtheorem{corollary}[theorem]{Corollary}
\newtheorem{remark}[theorem]{Remark}
\theoremstyle{definition}
\theoremstyle{remark}
\newtheorem*{note*}{Note}
\numberwithin{equation}{section}
\newcommand{\rank}{\mathop{\operator@font rank}}
\newcommand{\conv}{\mathop{\operator@font conv}}
\newcommand{\vol}{\mathop{\operator@font vol}}
\newcommand{\onetagright}{\tagsleft@false}
\newcommand{\ls}{\leqslant}
\newcommand{\gr}{\geqslant}
\renewcommand{\epsilon}{\varepsilon}
\begin{document}
\small

\title{\bf Uniform cover inequalities for the volume of coordinate sections and projections of convex bodies}

\author{S.\ Brazitikos, A.\ Giannopoulos and D-M.\ Liakopoulos}

\date{}

\maketitle

\begin{abstract}
\footnotesize The classical Loomis-Whitney inequality and the uniform cover inequality of Bollob\'{a}s and Thomason
provide lower bounds for the volume of a compact set in terms of its lower dimensional coordinate projections.
We provide further extensions of these inequalities in the setting of convex bodies. We also establish the corresponding
dual inequalities for coordinate sections; these uniform cover inequalities for sections may be viewed as extensions of Meyer's
dual Loomis-Whitney inequality.
\end{abstract}

\section{Introduction}

The classical Loomis-Whitney inequality \cite{Loomis-Whitney-1949} compares the volume $|K|$ of a convex body $K$ in ${\mathbb R}^n$
with the geometric mean of the volumes $|P_i(K)|$ of its orthogonal projections onto $e_i^{\perp }$, where $\{e_1,\ldots ,e_n\}$
is an orthonormal basis of ${\mathbb R}^n$. We have
\begin{equation}\label{eq:intro-1}|K|^{n-1}\ls\prod_{i=1}^n|P_i(K)|\end{equation}
and equality holds if and only if $K$ is an orthogonal parallelepiped such that $\pm e_i$ are the normal vectors
of its facets. In this inequality, $|P_i(K)|$ denotes the $(n-1)$-dimensional volume of $P_i(K)$ (more generally,
when $A$ is a compact convex set in ${\mathbb R}^n$, we write $|A|$ for the volume of $A$ in the appropriate
affine subspace ${\rm aff}(A)$). In fact, \eqref{eq:intro-1} holds true for any compact subset $K$ of ${\mathbb R}^n$.

A dual inequality, in which the projections $P_i(K)$ are replaced by the sections $K\cap e^{\perp }$, was obtained
by Meyer in \cite{Meyer-1988}. For every convex body $K$ in ${\mathbb R}^n$ one has
\begin{equation}\label{eq:intro-2}|K|^{n-1}\gr \frac{n!}{n^n}\prod_{i=1}^n|K\cap e_i^{\perp }|\end{equation}
with equality if and only if $K$ is a linear image $T(B_1^n)$ of the cross-polytope $B_1^n={\rm conv}\{\pm e_1,\ldots ,\pm e_n\}$
for some diagonal (with respect to the given basis) operator $T={\rm diag}(\lambda_1,\ldots ,\lambda_n)$, $\lambda_i>0$.
Meyer's proof of this inequality is given for an unconditional convex body $K$, after observing that any Steiner symmetrization
of $K$ increases the right hand side of \eqref{eq:intro-2}.

Both inequalities have been generalized in the following setting: let $u_1,\ldots ,u_m$ be unit vectors in ${\mathbb R}^n$ and
let $c_1,\ldots ,c_m$ be positive real numbers such that John's condition
\begin{equation}\label{eq:intro-3}I_n=\sum_{i=1}^mc_iu_i\otimes u_i\end{equation}
is satisfied. Then, for every centered convex body $K$ in ${\mathbb R}^n$,
\begin{equation}\label{eq:intro-4}\frac{n!}{n^n}\prod_{i=1}^m|K\cap u_i^{\perp }|^{c_i}\ls |K|^{n-1}\ls\prod_{i=1}^m|P_{u_i^{\perp }}(K)|^{c_i}.\end{equation}
The assumption that $K$ is centered, i.e. it has its center of mass at the origin, is of course needed only
for the left hand side inequality. The equality cases are exactly the same with the ones in the Loomis-Whitney
and Meyer inequality respectively. The right hand side inequality in \eqref{eq:intro-4} was proved by Ball in \cite{Ball-1991}, while
the left hand side inequality was recently proved by Li and Huang in \cite{Li-Huang-2016}. The geometric Brascamp-Lieb inequality and its inverse,
due to Ball and Barthe (see \cite{Ball-handbook} and \cite{Barthe-1998}), play a crucial role in the proofs
of these more general inequalities.

A considerable extension of the Loomis-Whitney inequality was proved by Bollob\'{a}s and Thomason in \cite{Bollobas-Thomason-1995}.
In order to state their result, we introduce some notation and terminology. For every non-empty
$\tau\subset [n]:=\{1,\ldots ,n\}$ we set $F_{\tau }={\rm span}\{e_j:j\in\tau \}$ and $E_{\tau }=F_{\tau }^{\perp }$.
Given $s\gr 1$ and $\sigma\subseteq [n]$ we say that the (not necessarily distinct) sets $\sigma_1,\ldots ,\sigma_r\subseteq \sigma $
form an $s$-uniform cover of $\sigma $ if every $j\in \sigma $ belongs to exactly $s$ of the sets $\sigma_i$.
The {\it uniform cover inequality} of \cite{Bollobas-Thomason-1995} provides a lower bound for the volume of a compact
set in terms of the volumes of its coordinate projections that correspond to a uniform cover of $[n]$.

\begin{theorem}[Bollob\'{a}s-Thomason]\label{th:BT}Let $r\gr 1$ and let $(\sigma_1,\ldots ,\sigma_r)$ be an
$s$-uniform cover of $[n]$. For every compact subset $K$ of ${\mathbb R}^n$, which is the closure
of its interior, we have
\begin{equation}\label{eq:intro-5}|K|^s\ls\prod_{i=1}^r|P_{F_{\sigma_i}}(K)|.\end{equation}
\end{theorem}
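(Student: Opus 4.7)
The plan is to derive \eqref{eq:intro-5} from the geometric Brascamp-Lieb inequality for orthogonal projections onto coordinate subspaces, applied with the uniform weights $c_i=1/s$ for $i=1,\ldots,r$.

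First, I would verify the required decomposition of the identity. Since $P_{F_{\sigma_i}}=\sum_{j\in\sigma_i}e_j\otimes e_j$, the $s$-uniform cover hypothesis gives
\begin{equation*}
\sum_{i=1}^r c_i\,P_{F_{\sigma_i}} \;=\; \frac{1}{s}\sum_{j=1}^n \#\{i:j\in\sigma_i\}\,e_j\otimes e_j \;=\; I_n,
\end{equation*}
because every $j\in[n]$ belongs to exactly $s$ of the sets $\sigma_i$. This is precisely the data needed to invoke the geometric Brascamp-Lieb inequality for subspaces of Ball \cite{Ball-handbook} and Barthe \cite{Barthe-1998}.

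Next, I would apply that inequality to the indicator functions $f_i=\chi_{P_{F_{\sigma_i}}(K)}$. If $x\in K$, then $P_{F_{\sigma_i}}(x)\in P_{F_{\sigma_i}}(K)$ for every $i$, so $\chi_K(x)\ls\prod_{i=1}^r f_i(P_{F_{\sigma_i}}(x))^{1/s}$. Integrating over $\R^n$ and invoking Brascamp-Lieb,
\begin{equation*}
|K| \;\ls\; \int_{\R^n}\prod_{i=1}^r f_i\bigl(P_{F_{\sigma_i}}(x)\bigr)^{1/s}\,dx \;\ls\; \prod_{i=1}^r |P_{F_{\sigma_i}}(K)|^{1/s},
\end{equation*}
and raising to the $s$-th power produces \eqref{eq:intro-5}.

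The only non-trivial ingredient is the geometric Brascamp-Lieb inequality itself; once accepted, the proof is essentially formal verification that the cover gives a decomposition of identity. Two alternative routes deserve mention. One proceeds via Shearer's entropy inequality applied to $X$ uniform on $K$: one has $s\,h(X)\ls\sum_{i=1}^r h(X_{\sigma_i})$, and bounding each marginal differential entropy by that of the uniform law on $P_{F_{\sigma_i}}(K)$ yields \eqref{eq:intro-5} after exponentiation. The other is the original inductive proof of Bollob\'as and Thomason, modeled on Loomis--Whitney: one slices $K$ by a coordinate hyperplane, applies the inductive hypothesis on the slices, and reassembles via H\"older; the technical step there is to check that the $s$-uniform cover of $[n]$ restricts sensibly after removing one coordinate, so that the induction actually closes.
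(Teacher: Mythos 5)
Your proof is correct, but note that the paper itself does not prove Theorem \ref{th:BT}: it is quoted from Bollob\'{a}s--Thomason \cite{Bollobas-Thomason-1995}, whose original argument is an elementary combinatorial induction that in fact establishes the stronger ``box theorem'' (there is a coordinate box of the same volume whose coordinate projections are no larger than those of $K$). Your route through the geometric Brascamp--Lieb inequality is therefore a genuinely different one, and it is sound: the identity $\sum_{i=1}^r \tfrac{1}{s}P_{F_{\sigma_i}}=I_n$ is exactly the decomposition-of-identity hypothesis, the pointwise bound $\chi_K(x)\ls\prod_{i=1}^r\chi_{P_{F_{\sigma_i}}(K)}\bigl(P_{F_{\sigma_i}}(x)\bigr)^{1/s}$ is immediate, and integrating yields \eqref{eq:intro-5} for every compact (indeed measurable) set, so the hypothesis that $K$ is the closure of its interior is not even needed. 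The one point to tighten is the citation: what you need is the multidimensional (subspace) geometric Brascamp--Lieb inequality with constant $1$, for projections $P_{F_{\sigma_i}}$ of rank $|\sigma_i|$ possibly larger than one; Ball's formulation in \cite{Ball-handbook} is the rank-one case, so you should appeal to Lieb's theorem on Gaussian extremizers or to Barthe's treatment \cite{Barthe-1998} (or simply run your Shearer-entropy alternative, which is the standard short proof: $s\,h(X)\ls\sum_i h(X_{\sigma_i})$ for $X$ uniform on $K$, plus the fact that among densities supported on a set of finite volume the uniform one maximizes differential entropy). In short, the Brascamp--Lieb/Shearer approach buys brevity and extra generality, while the original inductive proof buys an elementary self-contained argument and the stronger box theorem; either is acceptable here, since only inequality \eqref{eq:intro-5} is used in the paper.
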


In the first part of this article we obtain some {\it restricted} variants of the Loomis-Whitney inequality and of the
uniform cover inequality of Theorem \ref{th:BT}. Our starting point is the following inequality from
\cite{Giannopoulos-Hartzoulaki-Paouris-2002}: If $i\neq j\in \{ 1,\ldots ,n\}$
and $P_{ij}(K)=P_{E_{ij}}(K)$, where $E_{ij}={\rm span}\{e_i,e_j\}^{\perp }$ then
\begin{equation}\label{eq:intro-6}|P_i(K)|\, |P_j(K)|\gr \frac{n}{2(n-1)}\, |K|\, |P_{ij}(K)|.\end{equation}
This inequality may be viewed as a restricted (or ``local") version of the Loomis-Whitney inequality, in the sense that it gives a lower estimate for the
geometric mean of just two coordinate hyperplane projections of a convex body. A consequence of \eqref{eq:intro-6} is the inequality
\begin{equation}\label{eq:intro-7}\frac{S(P_{u^{\perp }}(K))}{|P_{u^{\perp }}(K)|}\ls \frac{2(n-1)}{n}\frac{S(K)}{|K|}\end{equation}
for every convex body $K$ in ${\mathbb R}^n$ and every $u\in S^{n-1}$, where $S(A)$ is the surface area of $A$ in the appropriate
dimension. This inequality was used in \cite{Giannopoulos-Hartzoulaki-Paouris-2002}
for the study of a question (posed by Dembo, Cover and Thomas \cite{Dembo-Cover-Thomas-1991}) about the monotonicity of an analogue of
the Fisher information on the class of compact convex sets, and it reappears in \cite{Giannopoulos-Koldobsky-Valettas-2016},
where the question to compare the surface area $S(K)$ of a convex body $K$ in ${\mathbb R}^n$ to the average, minimal or maximal
surface area of its  hyperplane projections is studied.

In Section 3 we revisit \eqref{eq:intro-6}. We adapt the proof of \cite[Lemma 4.1]{Giannopoulos-Hartzoulaki-Paouris-2002}
and combine it with the uniform cover inequality \eqref{eq:intro-5} of Theorem \ref{th:BT} to obtain the next generalization
of \eqref{eq:intro-6}.

\begin{theorem}\label{th:intro-1}Let $r>s\gr 1$, let $\sigma\subseteq [n]$ with cardinality $|\sigma |=d<n$ and let
$(\sigma_1,\ldots ,\sigma_r)$ be an $s$-uniform cover of $\sigma $. For every convex body $K$ in ${\mathbb R}^n$ we have
\begin{equation}\label{eq:intro-8}\prod_{i=1}^r|P_{E_{\sigma_i}}(K)|\gr \gamma (n,d,s,r)|P_{E_{\sigma }}(K)|^s\,|K|^{r-s},\end{equation}
where
\begin{equation}\label{eq:intro-9}\gamma (n,d,s,r)={n\choose d}^{r-s}{n-\frac{sd}{r}\choose n-d}^{-r}.\end{equation}
\end{theorem}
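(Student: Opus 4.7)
\medskip
\noindent\textbf{Proof plan.} The strategy is to slice $K$ over the projection $W:=P_{E_\sigma}(K)$, apply the Bollob\'as-Thomason inequality on each fiber, and then invoke Berwald's integral inequality for the concave function $y\mapsto|K_y|^{1/d}$. Assume without loss of generality that $\sigma=\{1,\ldots,d\}$, so that $F_\sigma\simeq\R^d$ and $E_\sigma\simeq\R^{n-d}$, and for $y\in W$ write $K_y:=\{x\in F_\sigma:(x,y)\in K\}$ for the $d$-dimensional fiber; by Fubini $|K|=\int_W|K_y|\,dy$. Setting $\tau_i:=\sigma\setminus\sigma_i$, the decomposition $E_{\sigma_i}=F_{\tau_i}\oplus E_\sigma$ gives
$$|P_{E_{\sigma_i}}(K)|=\int_W|P^{F_\sigma}_{F_{\tau_i}}(K_y)|\,dy,$$
where $P^{F_\sigma}_{F_{\tau_i}}$ denotes the coordinate projection of $F_\sigma$ onto $F_{\tau_i}$. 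Since each $j\in\sigma$ lies in exactly $s$ of the $\sigma_i$'s, it lies in exactly $r-s$ of the $\tau_i$'s; thus $(\tau_1,\ldots,\tau_r)$ is an $(r-s)$-uniform cover of $\sigma$, and Theorem~\ref{th:BT} applied to $K_y\subset F_\sigma\simeq\R^d$ yields the pointwise estimate
$$\prod_{i=1}^r|P^{F_\sigma}_{F_{\tau_i}}(K_y)|\gr|K_y|^{r-s}\qquad(y\in W).$$

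To pass from this pointwise bound to a product of integrals, I would apply H\"older's inequality in the form $\prod_{i=1}^r\int_W f_i\,dy\gr\bigl(\int_W\prod_i f_i^{1/r}\,dy\bigr)^r$ with $f_i(y)=|P^{F_\sigma}_{F_{\tau_i}}(K_y)|$; combined with the previous bound this produces
$$\prod_{i=1}^r|P_{E_{\sigma_i}}(K)|\gr\left(\int_W|K_y|^{(r-s)/r}\,dy\right)^{\!r}.$$
It remains to bound the right-hand side in terms of $|W|^{s/r}|K|^{(r-s)/r}$ with the correct constant. By Brunn's principle, $f(y):=|K_y|^{1/d}$ is non-negative and concave on the convex body $W\subset\R^{n-d}$, so Berwald's inequality applies: for a non-negative concave function on an $m$-dimensional convex body, the quantity $\bigl(\binom{m+p}{m}|W|^{-1}\int_W f^p\bigr)^{1/p}$ is a decreasing function of $p>0$. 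Comparing the values at $p=d$ (where $f^d=|K_y|$, $\int f^d=|K|$ and $\binom{n-d+d}{n-d}=\binom{n}{d}$) and at $p=d(r-s)/r$ (where $n-d+p=n-sd/r$), and then raising the resulting estimate to the $r$-th power, produces exactly the constant $\gamma(n,d,s,r)=\binom{n}{d}^{r-s}\binom{n-sd/r}{n-d}^{-r}$.

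The slicing identity and the H\"older step are routine; I expect the main technical point to be the careful application of Berwald's inequality at the non-integer exponent $p=d(r-s)/r$ and the book-keeping that identifies the resulting ratio of binomial factors with $\gamma(n,d,s,r)$. As a sanity check, the case $d=2$, $r=2$, $s=1$, $\sigma_i=\{i\}$ collapses the constant to $n/(2(n-1))$, recovering~\eqref{eq:intro-6}.
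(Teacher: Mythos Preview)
Your proposal is correct and follows essentially the same argument as the paper: slice over $P_{E_\sigma}(K)$, observe that the complements $\tau_i=\sigma\setminus\sigma_i$ form an $(r-s)$-uniform cover so that Bollob\'as--Thomason applies fiberwise, then combine H\"older with Berwald's inequality (at exponents $p=d(r-s)/r$ and $q=d$, with $m=n-d$) to extract the constant $\gamma(n,d,s,r)$. The only cosmetic difference is notation; the logical skeleton is identical.
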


Note that if the sets $\sigma_1,\ldots ,\sigma_r$ have the same cardinality $k$, then $k=\frac{sd}{r}$ and the result takes the form
\begin{equation}\label{eq:intro-10}\prod_{i=1}^r|P_{E_{\sigma_i}}(K)|\gr {n\choose d}^{r-s}{n-k\choose n-d}^{-r}|P_{E_{\sigma }}(K)|^s\,|K|^{r-s}.\end{equation}
Our starting point \eqref{eq:intro-6} corresponds to the special case $d=r=2$, $k=1$ and $s=1$. The case $k=1$, $d=r$ and $s=1$ has been recently
studied by Soprunov and Zvavitch in \cite{Soprunov-Zvavitch-2016}; they use a similar argument, based on \cite[Lemma 4.1]{Giannopoulos-Hartzoulaki-Paouris-2002}
and on the classical Loomis-Whitney inequality. They also present an example which shows that the constant
\begin{equation}\gamma (n,r,1,r)={n\choose r}^{r-1}{n-1\choose n-r}^{-r}=\left (\frac{n}{r}\right )^r{n\choose r}^{-1}\end{equation}
is optimal.

\smallskip

In the second part of this article, starting from Meyer's inequality \eqref{eq:intro-2} we study the natural question if it is possible
to have an inequality for sections, which is dual to \eqref{eq:intro-6}. More precisely the question is if, for every centered convex body $K$ in ${\mathbb R}^n$
and every $i\neq j\in \{ 1,\ldots ,n\}$,
\begin{equation}\label{eq:intro-11}|K\cap e_i^{\perp }|\, |K\cap e_j^{\perp }|\ls c_0|K\cap E_{ij}|\,|K|,\end{equation}
where $c_0>0$ is an absolute constant. In Section 4 we exploit the main properties of the family of the $L_p$-centroid bodies $Z_p(K)$ of $K$
to show that this question has an affirmative answer. In a few words, through a duality argument, the question about coordinate sections of $K$
is translated to a question about coordinate projections of some projection of a suitable centroid body of $K$, and then one may use
the Loomis-Whitney inequality (or some extension of it) to complete the proof. Generalizing the method and making full use of the uniform cover inequality
of Bollob\'{a}s and Thomason, one can prove more general inequalities of this form, in the spirit of Theorem \ref{th:intro-1}.

\begin{theorem}\label{th:intro-2}Let $r>s\gr 1$, let $\sigma\subseteq [n]$ with cardinality $|\sigma |=d<n$ and let
$(\sigma_1,\ldots ,\sigma_r)$ be an $s$-uniform cover of $\sigma $. Let also $d_i=|\sigma_i|$. For every centered convex body $K$ in ${\mathbb R}^n$ we have
\begin{equation}\label{eq:intro-12}\prod_{i=1}^r|K\cap E_{\sigma_i}|\ls \frac{(c_0d)^{ds}}{d_1^{d_1}\cdots d_r^{d_r}}|K\cap E_{\sigma }|^s|K|^{r-s},\end{equation}
where $c_0>0$ is an absolute constant.
\end{theorem}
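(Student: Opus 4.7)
I would follow the plan sketched by the authors: translate each coordinate section $K\cap E_{\sigma_i}$ into a coordinate projection $P_{F_{\sigma_i}}(Z_{d_i}(K))$ of the appropriate $L_{d_i}$-centroid body of $K$, apply the Bollob\'as--Thomason uniform cover inequality (Theorem~\ref{th:BT}) to these projections inside the ambient coordinate subspace $F_\sigma$, and convert back to $K\cap E_\sigma$ by the reverse direction of the same translation.

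The key ingredient is a two-sided Ball-type comparison: for a centered convex body $L\subset\R^n$ and a coordinate subspace $F_\tau$ of dimension $k$, log-concavity arguments (Brunn's theorem, Borell/Fradelizi bounds on the marginal $y\mapsto |L\cap(F_\tau^\perp+y)|$, and Ball's standard construction of a centered convex body in $F_\tau$ out of this marginal) should yield estimates of the form
\[
\frac{k!}{(c\,k)^k}\cdot\frac{|L|}{|P_{F_\tau}(Z_k(L))|}\ls |L\cap F_\tau^\perp|\ls \frac{(c\,k)^k}{k!}\cdot\frac{|L|}{|P_{F_\tau}(Z_k(L))|},
\]
for a universal constant $c>0$. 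Applying the upper inequality with $L=K$, $\tau=\sigma_i$, $k=d_i$ and taking the product over $i$, using $\sum_i d_i=sd$, produces
\[
\prod_{i=1}^r|K\cap E_{\sigma_i}|\ls \frac{c^{sd}\prod_i d_i^{d_i}}{\prod_i d_i!}\cdot\frac{|K|^r}{\prod_{i=1}^r|P_{F_{\sigma_i}}(Z_{d_i}(K))|}.
\]

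The uniform cover hypothesis enters when bounding the product of centroid-body projections from below. First, since $\sigma_i\subseteq\sigma$, the standard comparison $Z_{d_i}(K)\subseteq Z_d(K)\subseteq c\tfrac{d}{d_i}Z_{d_i}(K)$ (Paouris' inclusion for $L_p$-centroid bodies) lets me absorb the discrepancy between the different $Z_{d_i}(K)$ into a universal factor per index and reduce to a common body $Z_d(K)$. Applying Theorem~\ref{th:BT} inside $F_\sigma$ to $P_{F_\sigma}(Z_d(K))$ then gives
\[
\prod_{i=1}^r|P_{F_{\sigma_i}}(Z_d(K))|\gr|P_{F_\sigma}(Z_d(K))|^s.
\]
The lower half of the Ball-type comparison, applied with $\tau=\sigma$ and $k=d$, converts $|P_{F_\sigma}(Z_d(K))|$ into $|K|/|K\cap E_\sigma|$ (up to the factor $d!/(cd)^d$). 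Substitution and simplification using Stirling's asymptotics $d_i!\sim(d_i/e)^{d_i}$ and $d!\sim(d/e)^d$ collapse the accumulated constants into the required form $(c_0 d)^{ds}/\prod_i d_i^{d_i}$.

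The main obstacle is pinning down the two-sided Ball-type bound with constants sharp enough (a universal factor per unit of dimension) that the $d_i^{d_i}$ bookkeeping actually closes. This is where Fradelizi's inequality for marginals of centered log-concave measures and Ball's convex-body construction must be used carefully: any looser comparison, growing faster than $(c\,k)^k/k!$ in $k$, would be absorbed into $\prod_i d_i^{d_i}$ the wrong way and destroy the entropy shape of the constant in \eqref{eq:intro-12}.
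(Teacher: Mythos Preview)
Your plan is exactly the paper's proof: reduce sections to projections of centroid bodies via the two–sided estimate \eqref{eq:sections-3} of Lemma~\ref{lem:BGVV}, pass from $Z_{d_i}(K)$ to $Z_d(K)$ using \eqref{eq:sections-2}, apply Bollob\'as--Thomason inside $F_\sigma$, and convert back. The only cosmetic difference is that the paper quotes \eqref{eq:sections-3} already in the form $c_2^k\ls |K\cap F^\perp|\cdot|P_F(Z_k(K))|\ls c_3^k$ (for $|K|=1$), so no Stirling bookkeeping is needed and the constant $(c_0d)^{ds}/\prod_i d_i^{d_i}$ drops out directly from the inclusion $Z_d(K)\subseteq (c_1d/d_i)Z_{d_i}(K)$; your worry about ``constants sharp enough'' is therefore not an issue.
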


Note that under the assumptions of Theorem \ref{th:intro-2} we have $d_1+\cdots +d_r=ds$, and hence 
$$d_1^{d_1}\ldots d_r^{d_r}\gr\left(\frac{ds}{r}\right)^{ds}$$
by Jensen's inequality. Therefore, the result may be written in the simpler form
\begin{equation}\label{eq:intro-13}\prod_{i=1}^r|K\cap E_{\sigma_i}|\ls \left (\frac{c_0r}{s}\right )^{ds}|K\cap E_{\sigma }|^s|K|^{r-s}.\end{equation}
This is equivalent to \eqref{eq:intro-12} if all the sets $\sigma_i$ have the same cardinality $k=\frac{ds}{r}$. Our starting point \eqref{eq:intro-11} corresponds to the special case $d=r=2$, $k=1$ and $s=1$. In the more general case $d=r$, $k=1$ and $s=1$, which corresponds to $\sigma_j=\{i_j\}$ for some distinct $i_1,\ldots ,i_r\in [n]$, Theorem
\ref{th:intro-2} provides the bound
\begin{equation}\label{eq:intro-13b}\prod_{j=1}^r|K\cap e_{i_j}^{\perp }|\ls \left (c_0r\right )^r|K\cap [{\rm span}\{e_{i_1},\ldots ,e_{i_r}\}]^{\perp }|\,|K|^{r-1}.\end{equation}
The constant $(c_0r)^r$ is probably non optimal but it depends only on $r$ and not on the dimension $n$.

\smallskip

In Section 5 we provide an alternative proof of \eqref{eq:intro-6}, with the same constant, using
a general inequality about mixed volumes. Let ${\cal C}=(K_3,\ldots ,K_n)$ be an $(n-2)$-tuple of compact convex sets
in ${\mathbb R}^n$. For any pair of compact convex sets $A,B$ in ${\mathbb R}^n$ we denote the mixed volume
$V(A,B,{\cal C})$ by $V(A,B)$ (see Section 2 for basic facts about mixed volumes). Then, for any triple $A,B,C$
of compact convex sets in ${\mathbb R}^n$ we have
\begin{equation}\label{eq:intro-14}V(A,A)V(B,C)\ls 2V(A,B)V(A,C).\end{equation}
In fact, \eqref{eq:intro-14} is an immediate consequence of one of the main lemmas in \cite{Giannopoulos-Hartzoulaki-Paouris-2002}
and \cite{Fradelizi-Giannopoulos-Meyer-2003}. We observe that \eqref{eq:intro-14} leads to a generalization of
\eqref{eq:intro-6}, valid for any pair of hyperplane projections defined by two not necessarily orthogonal unit vectors $u$ and $v$.

\begin{theorem}\label{th:intro-3}
Let $K$ be a convex body in ${\mathbb R}^n$ and $u,v\in S^{n-1}$.
If $P_{u,v}(K)=P_{{\rm span}\{u,v\}^{\perp }}(K)$, then
\begin{equation}\label{eq:intro-15}|P_u(K)|\, |P_v(K)|\gr \frac{n}{2(n-1)}\sqrt{1-\langle u,v\rangle ^2} \,|K|\, |P_{u,v}(K)|.\end{equation}
\end{theorem}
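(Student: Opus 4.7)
The plan is to apply the mixed volume inequality \eqref{eq:intro-14} with the $(n-2)$-tuple of fixed bodies chosen as $\mathcal{C}=(K,\ldots,K)$, and with varying bodies $A=K$, $B=[-u,u]$, $C=[-v,v]$. Under this choice each of the four mixed volumes appearing in \eqref{eq:intro-14} can be identified with a classical geometric quantity, and the inequality rearranges to \eqref{eq:intro-15}.

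Three of these identifications are elementary. The Cavalieri identity $|K+\varepsilon[-w,w]|=|K|+2\varepsilon|P_{w^{\perp}}(K)|$, valid for every unit vector $w$, extracts via Minkowski's polynomial expansion the formula $V(K[n-1],[-w,w])=\frac{2}{n}|P_{w^{\perp}}(K)|$. Applied with $w=u$ and $w=v$ this yields $V(A,A)=|K|$, $V(A,B)=\frac{2}{n}|P_u(K)|$, and $V(A,C)=\frac{2}{n}|P_v(K)|$.

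The crucial step is the evaluation of $V(B,C)=V([-u,u],[-v,v],K,\ldots,K)$. I would set $E=\mathrm{span}\{u,v\}$ and $F=E^{\perp}$, and for $y\in F$ let $K_y=K\cap(E+y)$, viewed as a planar convex set in $E$. Since the two segments lie in $E$, Minkowski addition respects the $F$-fibers:
$$(K+\varepsilon_1[-u,u]+\varepsilon_2[-v,v])\cap P_F^{-1}(y)=K_y+\varepsilon_1[-u,u]+\varepsilon_2[-v,v].$$
Combining Fubini's theorem with the exact planar expansion
$$|K_y+\varepsilon_1[-u,u]+\varepsilon_2[-v,v]|_2=|K_y|_2+2\varepsilon_1 V_2(K_y,[-u,u])+2\varepsilon_2 V_2(K_y,[-v,v])+4\varepsilon_1\varepsilon_2\sqrt{1-\langle u,v\rangle^2},$$
in which the coefficient $4\sqrt{1-\langle u,v\rangle^2}$ is twice the planar mixed area of the two segments (i.e.\ the area of the parallelogram they span), one sees that the $\varepsilon_1\varepsilon_2$-coefficient of $|K+\varepsilon_1[-u,u]+\varepsilon_2[-v,v]|$ equals $4\sqrt{1-\langle u,v\rangle^2}\,|P_{u,v}(K)|$. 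Using the standard identity $V(K[n-2],L_1,L_2)=\frac{1}{n(n-1)}\partial_{\varepsilon_1}\partial_{\varepsilon_2}|_0\,|K+\varepsilon_1 L_1+\varepsilon_2 L_2|$, this gives
$$V([-u,u],[-v,v],K,\ldots,K)=\frac{4}{n(n-1)}\sqrt{1-\langle u,v\rangle^2}\,|P_{u,v}(K)|.$$

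Plugging the four computed quantities into $V(A,A)V(B,C)\leq 2V(A,B)V(A,C)$ and collecting the numerical factors ($\frac{4}{n(n-1)}\cdot\frac{n^2}{8}=\frac{n}{2(n-1)}$) produces exactly \eqref{eq:intro-15}. The principal technical obstacle is the computation of $V(B,C)$: one must combine the fiber decomposition across $F=\mathrm{span}\{u,v\}^{\perp}$ with the planar mixed area formula for non-orthogonal segments, and verify that the integrand's $\varepsilon_1\varepsilon_2$-coefficient is independent of $y$, so that the integration reduces cleanly to $|P_{u,v}(K)|$ and delivers the factor $\sqrt{1-\langle u,v\rangle^2}$.
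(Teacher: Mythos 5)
Your proof is correct and follows essentially the same route as the paper: both apply the mixed-volume inequality \eqref{eq:intro-14} with ${\cal C}=(K,\ldots ,K)$, $A=K$ and segments in the directions $u,v$, and then identify the four mixed volumes with $|K|$, $|P_u(K)|$, $|P_v(K)|$ and $\sqrt{1-\langle u,v\rangle ^2}\,|P_{u,v}(K)|$ (up to the harmless normalization $[-u,u]$ versus $[0,u]$). The only difference is that where the paper quotes Fedotov's formula (Lemma \ref{lem:AF-2}) to evaluate $V(L_u,L_v,K[n-2])$, you rederive exactly the needed special case by the fiber decomposition over ${\rm span}\{u,v\}^{\perp }$ together with Fubini and the planar mixed area of two segments, which is a valid self-contained substitute.
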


We also discuss a different question, which illustrates the usefulness of \eqref{eq:intro-14}. It has been conjectured by Hug and Schneider in \cite{Hug-Schneider-2011}
that for any $1\ls r\ls n$ and any $r$-tuple $(K_1,\ldots ,K_r)$ of convex bodies in ${\mathbb R}^n$ one has
\begin{equation}\label{eq:intro-16}V(K_1,\ldots ,K_r,B_2^n[n-r])\ls \frac{(n-r)!\omega_{n-r}}{n!}\prod_{i=1}^rV_1(K_i),\end{equation}
where $V(A_1,\ldots ,A_n)$ is the mixed volume of $n$ compact convex sets $A_i$, the notation $A[m]$ stands for an $m$-tuple $A,\ldots ,A$, and
\begin{equation}\omega_{n-s}V_s(K)={n\choose s}V(K[s],B_2^n[n-s])\end{equation}
is the $s$-th intrinsic volume of $K$ (see also \cite{Betke-Weil-1991}
for the planar case). Hug and Schneider proved \eqref{eq:intro-16} under the assumption that the bodies $K_1,\ldots ,K_r$ are zonoids.
In the case $r=2$, Artstein-Avidan, Florentin and Ostrover have proved in \cite{Artstein-Florentin-Ostrover-2014}
that if $K$ is any convex body and $Z$ is a zonoid in ${\mathbb R}^n$ then
\begin{equation}\label{eq:intro-17}|B_2^n|\,V(K,Z,B_2^n[n-2])\ls \frac{n}{n-1}\frac{\omega_n\omega_{n-2}}{\omega_{n-1}^2}\, V(K,B_2^n[n-1])\,V(Z,B_2^n[n-1]).
\end{equation}
By the definition of $V_1(K)$ this inequality is the same as the conjectured one (for $r=2$).

A discussion of a more general problem is given in \cite{Soprunov-Zvavitch-2016}, where Soprunov and Zvavitch prove that if $A$ is any
convex body in ${\mathbb R}^n$ and $Z_1,\ldots ,Z_r$ are zonoids then
\begin{equation}\label{eq:intro-19}|A|^{r-1}V(Z_1,\ldots ,Z_r,A[n-r])\ls r^{r-1}\prod_{i=1}^rV(Z_i,A[n-1]),\end{equation}
while for an $r$-tuple of (arbitrary) convex bodies $K_1,\ldots ,K_r$ in ${\mathbb R}^n$ one has
\begin{equation}\label{eq:intro-20}|A|^{r-1}V(K_1,\ldots ,K_r,A[n-r])\ls c_{n,r}\prod_{i=1}^rV(K_i,A[n-1]),\end{equation}
where $c_{n,r}=n^rr^{r-1}$. Moreover, the constant $c_{n,r}$ can be replaced by $c_{n,r}^{\prime }=n^{r/2}r^{r-1}$ if $K_1,\ldots ,K_r$ are origin symmetric.

We observe that \eqref{eq:intro-14} implies a much more general inequality, which confirms the conjectured
inequality \eqref{eq:intro-16} in the case $r=2$, with an absolute (almost optimal) constant and shows that the constant $c_{n,2}$ in \eqref{eq:intro-20}
may be replaced by the constant $2$.

\begin{theorem}\label{th:intro-4}Let $A$ be a convex body in ${\mathbb R}^n$. Then, for any pair of convex bodies $K_1$ and $K_2$ in ${\mathbb R}^n$,
\begin{equation}\label{eq:intro-18} |A|\,V(K_1,K_2,A[n-2])\ls 2V(K_1,A[n-1])\,V(K_2,A[n-1]).
\end{equation}
\end{theorem}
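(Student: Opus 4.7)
The plan is to apply the mixed-volume inequality \eqref{eq:intro-14} in a very particular specialization, namely to the fixed $(n-2)$-tuple $\mathcal{C}=(A,\ldots,A)$ in which the body $A$ is repeated $n-2$ times. Recall that in \eqref{eq:intro-14} the abbreviated mixed volume $V(X,Y)$ denotes $V(X,Y,\mathcal{C})=V(X,Y,K_3,\ldots,K_n)$ for a background tuple $\mathcal{C}$, and the inequality is the Alexandrov--Fenchel-type bound
\begin{equation*}
V(X,X)\,V(Y,Z)\ls 2\,V(X,Y)\,V(X,Z)
\end{equation*}
valid for any three compact convex sets $X,Y,Z$ in $\mathbb{R}^n$. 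The statement is taken as given; it is attributed in the excerpt to the main lemmas of \cite{Giannopoulos-Hartzoulaki-Paouris-2002} and \cite{Fradelizi-Giannopoulos-Meyer-2003}.

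With the choice $\mathcal{C}=(A[n-2])$, the four mixed volumes appearing in \eqref{eq:intro-14} become:
\begin{equation*}
V(X,X)=V(X,X,A[n-2]),\qquad V(Y,Z)=V(Y,Z,A[n-2]),
\end{equation*}
\begin{equation*}
V(X,Y)=V(X,Y,A[n-2]),\qquad V(X,Z)=V(X,Z,A[n-2]).
\end{equation*}
Now I would set $X=A$, $Y=K_1$ and $Z=K_2$. Then $V(X,X)$ collapses to $V(A,\ldots,A)=|A|$ since all $n$ slots are $A$; the term $V(Y,Z)$ becomes exactly $V(K_1,K_2,A[n-2])$; and the two terms $V(X,Y),\,V(X,Z)$ become $V(K_1,A[n-1])$ and $V(K_2,A[n-1])$ respectively. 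Substituting these identifications into \eqref{eq:intro-14} yields
\begin{equation*}
|A|\,V(K_1,K_2,A[n-2])\ls 2\,V(K_1,A[n-1])\,V(K_2,A[n-1]),
\end{equation*}
which is the desired inequality \eqref{eq:intro-18}.

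There is essentially no obstacle once \eqref{eq:intro-14} is in hand: the whole proof is the bookkeeping step of reading off the mixed volumes after specializing the background tuple to $A[n-2]$. The only point worth emphasizing is that the argument explains the factor $2$ and the dimension-free constant in the conclusion: the constant $2$ on the right-hand side of \eqref{eq:intro-18} is inherited directly from the constant $2$ in the triple-inequality \eqref{eq:intro-14}, and no further loss is incurred by the specialization. This is precisely what yields the improvement over the constant $c_{n,2}=n^2\cdot 2$ in \eqref{eq:intro-20} for $r=2$ mentioned just before the theorem.
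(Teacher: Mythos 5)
Your proposal is correct and is essentially the paper's own argument: the paper likewise applies inequality \eqref{eq:intro-14} (Lemma \ref{lem:AF-1}) with the background tuple ${\cal C}=(A,\ldots ,A)$ and the choices $A$, $K_1$, $K_2$ in the three free slots, so that $V(A,A,A[n-2])=|A|$ and the remaining mixed volumes read off exactly as in \eqref{eq:intro-18}. The only difference is that the paper also supplies a proof of \eqref{eq:intro-14} via Aleksandrov--Fenchel (Lemma \ref{lem:AF-1}), which you take as given, as the statement itself permits.
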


\smallskip

Choosing $A=B_2^n$ in Theorem \ref{th:intro-4} we get a variant of \eqref{eq:intro-16} with constant $2$. One can check that $\frac{n-1}{n}<\frac{\omega_n\omega_{n-2}}{\omega_{n-1}^2}<1$, and hence the conjectured constant
$b_{n,2}:=\frac{n}{n-1}\frac{\omega_n\omega_{n-2}}{\omega_{n-1}^2}$ satisfies
$$1<b_{n,2}<\frac{n}{n-1}.$$
In other words, the constant in Theorem \ref{th:intro-4} is worse than the conjectured one (only) by a factor $2$.

Regarding the constants $c_{n,r}$ and $c_{n,r}^{\prime }$ in \eqref{eq:intro-20}, from Theorem \ref{th:intro-4} we immediately
see that $c_{n,2}\ls 2$ and we also observe that an induction argument leads to a version of the general inequality
\eqref{eq:intro-20} with a constant $c_r$ which depends only on $r$. It would be interesting to determine the best possible
value of this constant; simple induction gives the very crude estimate $c_r\ls 2^{2^{r-1}-1}$.

\section{Notation and background information}

We work in ${\mathbb R}^n$, which is equipped with a Euclidean structure $\langle \cdot,\cdot\rangle$
and we fix an orthonormal basis $\{e_1,\ldots ,e_n\}$. We denote by $B_2^n$ and $S^{n-1}$ the Euclidean unit ball and sphere
in ${\mathbb R}^n$ respectively. We write $\sigma $ for the normalized rotationally invariant probability measure on $S^{n-1}$ and $\nu $
for the Haar probability measure on the orthogonal group $O(n)$. Let $G_{n,k}$ denote the Grassmannian of all $k$-dimensional
subspaces of ${\mathbb R}^n$. Then, $O(n)$ equips $G_{n,k}$ with a Haar probability measure $\nu_{n,k}$.
The letters $c,c^{\prime }, c_1, c_2$ etc. denote absolute positive constants which may change from line to line. Whenever we write
$a\simeq b$, we mean that there exist absolute constants $c_1,c_2>0$ such that $c_1a\ls b\ls c_2a$.

Let ${\mathcal K}_n$ denote the class of all non-empty compact convex subsets of ${\mathbb R}^n$.
If $K\in {\mathcal K}_n$ has non-empty interior, we will say that $K$ is a convex body. If $A\in {\mathcal K}_n$,
we will denote by $|A|$ the volume of $A$ in the appropriate affine subspace unless otherwise stated. The volume of $B_2^n$ is denoted by $\omega_n$.
We say that a convex body $K$ in ${\mathbb R}^n$ is symmetric if $x\in K$ implies that $-x\in K$, and that $K$ is centered
if its center of mass $\frac{1}{|K|}\int_Kx\,dx $ is at the origin. The support function of a
convex body $K$ is defined by $h_K(y)=\max \{\langle x,y\rangle :x\in K\}$, and the mean width of $K$ is
\begin{equation}\label{eq:not-1}w(K)=\int_{S^{n-1}}h_K(\theta )\,d\sigma (\theta ). \end{equation}
For any $E\in G_{n,k}$ we denote by $E^{\perp }$ the orthogonal subspace of $E$, i.e. $E^{\perp }=\{x\in {\mathbb R}^n:
\langle x,y\rangle =0\;\hbox{for all}\,y\in E\}$. In particular, for any $u\in S^{n-1}$ we define $u^{\perp }=\{x\in {\mathbb R}^n:\langle x,u\rangle =0\}$.
The section of $K\in {\mathcal K}_n$ with a subspace $E$ of ${\mathbb R}^n$ is $K\cap E$, and the orthogonal projection of $K$ onto $E$ is denoted
by $P_E(K)$.

Mixed volumes are introduced by a classical theorem of Minkowski which describes the way volume behaves with respect
to the operations of addition and multiplication of compact vonvex sets by non-negative reals: If $K_1,\ldots ,K_N\in {\cal K}_n$, $N\in {\mathbb N}$,
then the volume of $t_1K_1+\cdots +t_NK_N$ is a homogeneous polynomial of degree $n$ in $t_i\gr 0$ (see \cite{Burago-Zalgaller-book} and \cite{Schneider-book}):
\begin{equation}\label{eq:not-2}\big |t_1K_1+\cdots +t_NK_N\big |=\sum_{1\ls i_1,\ldots ,i_n\ls N} V(K_{i_1},\ldots
,K_{i_n})t_{i_1}\ldots t_{i_n},\end{equation}
where the coefficients $V(K_{i_1},\ldots ,K_{i_n})$ are chosen to be invariant under permutations of their
arguments. The coefficient $V(K_{i_1},\ldots ,K_{i_n})$ is called the mixed volume of the $n$-tuple $(K_{i_1},\ldots ,K_{i_n})$.
We will often use the fact that $V$ is positive linear with respect to each of its arguments and that
$V(K,\ldots ,K)=|K|_n$ (the $n$-dimensional Lebesgue measure of $K$) for all $K\in {\cal K}_n$.

Steiner's formula is a special case of Minkowski's theorem. The volume of $K+tB_2^n$, $t>0$, can be expanded as a polynomial in $t$:
\begin{equation}\label{eq:not-3}|K+tB_2^n|=\sum_{k=0}^n{n\choose k}W_k(K)t^k,\end{equation}
\noindent where $W_k(K):=V(K[n-k],B_2^n[k])$ is the $k$-th quermassintegral of $K$.

The Aleksandrov-Fenchel inequality states that if $K,L,K_3,\ldots ,K_n\in {\cal K}_n$, then
\begin{equation}\label{eq:not-4}V(K,L,K_3,\ldots ,K_n)^2\gr V(K,K,K_3,\ldots ,K_n) V(L,L,K_3,\ldots ,K_n).\end{equation}
In particular, this implies that the sequence $(W_0(K),\ldots ,W_n(K))$ is
log-concave. From the Aleksandrov-Fenchel inequality one can
recover the Brunn-Minkowski inequality as well as the following generalization
for the quermassintegrals:
\begin{equation}\label{eq:not-6}W_k(K+L)^{\frac{1}{n-k}}\gr W_k(K)^{\frac{1}{n-k}}+W_k(L)^{\frac{1}{n-k}},\qquad k=0,\ldots ,n-1.\end{equation}
We write $S(K)$ for the surface area of $K$. From Steiner's formula and the definition of surface area we see that $S(K)=nW_1(K)$.
Finally, let us mention Kubota's integral formula
\begin{equation}\label{eq:not-7}W_k(K)=\frac{\omega_n}{\omega_{n-k}}\int_{G_{n,n-k}}
|P_E(K)|\,d\nu_{n,n-k}(E),\qquad 1\ls k\ls n-1.\end{equation}
The case $k=1$ is Cauchy's surface area formula
\begin{equation}\label{eq:not-8}S(K)=\frac{\omega_n}{n\omega_{n-1}}\int_{S^{n-1}}|P_{u^{\perp }}(K)|\,d\sigma (u).\end{equation}
We refer to the books \cite{Gardner-book} and \cite{Schneider-book} for basic facts from the Brunn-Minkowski theory and to the book
\cite{AGA-book} for basic facts from asymptotic convex geometry. We also refer to \cite{BGVV-book} for
detailed information on the properties of the family of the $L_p$-centroid bodies of a convex body.

\section{Restricted Loomis-Whitney inequalities}

For the proof of Theorem \ref{th:intro-1}, we will use the uniform cover inequality \eqref{eq:intro-5} of Bollob\'{a}s and Thomason and the
next classical inequality of Berwald \cite{Berwald-1947}.

\begin{lemma}\label{lem:berwald}
Let $A$ be a convex body in ${\mathbb R}^m$ and let $\phi :A\rightarrow {\mathbb R}^+$ be a
concave function. Then, for every $0<p<q$,
\begin{equation}\label{eq:berwald-1}\left [{m+q\choose m}\frac{1}{|A|}\int_A|\phi (x)|^qdx\right ]^{1/q}\ls
\left [{m+p\choose m}\frac{1}{|A|}\int_A|\phi (x)|^pdx\right ]^{1/p}.\end{equation}
\end{lemma}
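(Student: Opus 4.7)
The plan is to prove Berwald's inequality by reducing to a one-dimensional integral via Brunn--Minkowski, comparing the resulting profile with an affine model function via a single-crossing argument, and exploiting the fact that the affine model satisfies Berwald with equality for every pair $(p,q)$.

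First, I would pass to a one-dimensional problem. The layer-cake formula gives
$$\int_A \phi(x)^r\,dx=r\int_0^M t^{r-1}|\{\phi\gr t\}|\,dt,$$
with $M=\sup_A\phi$. Let $f(t)=|\{x\in A:\phi(x)\gr t\}|^{1/m}$. Since $\phi$ is concave its superlevel sets are convex, so by the Brunn--Minkowski inequality $f$ is concave and non-increasing on $[0,M]$ with $f(0)=|A|^{1/m}$. Setting $g(t)=f(t)/f(0)$, so $g$ is concave non-increasing on $[0,M]$ with $g(0)=1$, one obtains
$$\frac{1}{|A|}\int_A\phi^r dx=r\int_0^M t^{r-1}g(t)^m\,dt.$$

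Next, I would introduce the affine model $g_0(t)=(1-t/a)_+$ for a parameter $a>0$. A direct Beta-integral computation gives $r\int_0^a t^{r-1}(1-t/a)^m dt=a^r/\binom{m+r}{m}$ for every $r>0$, so Berwald holds with \emph{equality} for $g_0$: the quantity $[\binom{m+r}{m}\cdot r\int t^{r-1}g_0^m dt]^{1/r}$ equals $a$ for every $r$. Choose $a$ so that the $p$-th moments of $g^m$ and $g_0^m$ coincide; this reduces the desired inequality to the single-moment comparison $\int_0^\infty t^{q-1}(g^m-g_0^m)\,dt\ls 0$.

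Finally, I would establish this via a single-crossing lemma. On $[0,\min(a,M)]$ the difference $g-g_0$ is concave (concave minus affine) and vanishes at $0$; a short argument shows that the $p$-th moment matching forces $a\gr M$ (otherwise one checks $g-g_0\gr 0$ throughout), so at $t=M$ the value of $g-g_0$ equals $-(1-M/a)\ls 0$. By concavity, $g-g_0$ is therefore $\gr 0$ on an initial interval $[0,t^*]$ and $\ls 0$ on $[t^*,\infty)$; since $g,g_0\gr 0$, the sign pattern of $g^m-g_0^m$ is the same. Writing $t^{q-1}=t^{p-1}\cdot t^{q-p}$ and using $t^{q-p}\ls (t^*)^{q-p}$ where $g^m-g_0^m\gr 0$ and $t^{q-p}\gr (t^*)^{q-p}$ where $g^m-g_0^m\ls 0$ yields
$$\int_0^\infty t^{q-1}(g^m-g_0^m)\,dt\ls (t^*)^{q-p}\int_0^\infty t^{p-1}(g^m-g_0^m)\,dt=0,$$
completing the proof. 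The main obstacle is the single-crossing lemma: the integrand can in principle change sign multiple times, and ruling this out requires exploiting the concavity of $g$ together with the matching of moments to pin down the correct endpoint behavior.
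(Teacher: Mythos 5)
The paper offers no proof of Lemma \ref{lem:berwald}: it is quoted as a classical result of Berwald \cite{Berwald-1947}. Your argument is essentially the standard proof of that result --- reduce to one dimension via the distribution function $g(t)=\big(|\{\phi\gr t\}|/|A|\big)^{1/m}$, which is concave by Brunn--Minkowski; observe that the affine profile $g_0(t)=(1-t/a)_+$ turns all the normalized moments $\big[{m+r\choose m}\,r\int t^{r-1}g_0^m\,dt\big]^{1/r}$ into the constant $a$; match the $p$-th moments and conclude by a single-crossing (Chebyshev-type) comparison. The Beta-integral computation, the choice of $a$, and the final estimate $\int t^{q-1}(g^m-g_0^m)\,dt\ls (t^*)^{q-p}\int t^{p-1}(g^m-g_0^m)\,dt=0$ are all correct. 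So the overall route is sound and complete in outline.

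Two endpoint claims are stated incorrectly, though both are harmless and easily repaired. First, the evaluation $(g-g_0)(M)=-(1-M/a)$ presupposes $g(M)=0$, i.e.\ that $\{\phi=\max\phi\}$ is a null set; this fails, for instance, when $\phi$ is constant. When $g(M)-g_0(M)>0$, concavity of $g-g_0$ on $[0,M]$ together with $(g-g_0)(0)=0$ forces $g-g_0\gr 0$ on all of $[0,M]$ (it lies above the chord), while $g^m-g_0^m=-g_0^m\ls 0$ on $(M,\infty)$; so the single-crossing pattern persists with $t^*=M$, and your final inequality goes through unchanged. Second, the moment matching does not literally force $a\gr M$: if $a<M$, then (as you note) $g\gr g_0$ everywhere, so $g^m-g_0^m\gr 0$ pointwise, and the vanishing of its $p$-th moment forces $g^m=g_0^m$ almost everywhere; then the $q$-th moments coincide as well and \eqref{eq:berwald-1} holds with equality. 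So that case is degenerate rather than impossible, and the conclusion holds in it too. With these two observations inserted (and noting that the edge case $t^*=0$ simply means $g^m\ls g_0^m$ everywhere, where the conclusion is immediate), the proof is complete.
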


\medskip

\noindent\textbf{Proof of Theorem \ref{th:intro-1}.} Let $r>s\gr 1$, let $\sigma\subseteq [n]$ with cardinality $|\sigma |=d<n$ and let
$(\sigma_1,\ldots ,\sigma_r)$ be an $s$-uniform cover of $\sigma $. Note that if $|\sigma_i|=d_i$ then
$$ds=d_1+\cdots +d_r.$$
For every $y\in P_{E_{\sigma }}(K)$ we define the sets
\begin{equation}\label{eq:berwald-2}K_{i}(y)=\left\{ t\in F_{\sigma\setminus\sigma_i}:y+t\in P_{E_{\sigma_i}}(K)\right\}\end{equation}
and
\begin{equation}\label{eq:berwald-3}K(y)=\{ t\in F_{\sigma }:y+t\in K\}.\end{equation}
Then, $K_i(y)$ is the orthogonal projection of $K(y)$ onto $F_{\sigma\setminus\sigma_i}$. Since
$(\sigma_1,\ldots ,\sigma_r)$ is an $s$-uniform cover of $\sigma $, we have that $(\sigma\setminus\sigma_1,\ldots ,\sigma\setminus\sigma_r)$
is an $(r-s)$-uniform cover of $\sigma $. It follows from \eqref{eq:intro-5} that
\begin{equation}\label{eq:berwald-4}|K(y)|^{r-s}\ls \prod_{i=1}^r|K_i(y)|\end{equation}
for every $y\in P_{E_{\sigma }}(K)$. An application of H\"{o}lder's inequality shows that
\begin{align}\label{eq:berwald-5}
\prod_{i=1}^r|P_{E_{\sigma_i }}(K)| &= \prod_{i=1}^r \int_{P_{E_{\sigma }}(K)}
|K_i(y)|dy\gr \left (\int_{P_{E_{\sigma }}(K)}(|K_1(y)|\cdots |K_r(y)|)^{1/r}dy\right )^r\\
\nonumber &\gr \left (\int_{P_{E_{\sigma }}(K)}|K(y)|^{\frac{r-s}{r}}dy\right )^r.
\end{align}
By the Brunn-Minkowski inequality, the function $\phi :P_{E_{\sigma }}(K)\rightarrow {\mathbb R}$ defined by
$\phi (y)=|K(y)|^{1/d}$ is concave, and
$$|K(y)|^{\frac{r-s}{r}}=\phi(y)^{\frac{(r-s)d}{r}}=\phi (y)^{d-\frac{d_1+\cdots +d_r}{r}}.$$
Note that
\begin{equation}\int_{P_{E_{\sigma }}(K)}\phi (y)^d\,dy =\int_{P_{E_{\sigma }}(K)}|K(y)|\,dy =|K|.\end{equation}
Applying Lemma \ref{lem:berwald} with $A=P_{E_{\sigma }}(K)$,
$m=n-d$, $p=\frac{(r-s)d}{r}$ and $q=d$, we get
\begin{align}\label{eq:berwald-6}
\left [{n-d+\frac{(r-s)d}{r}\choose n-d}\frac{1}{|P_{E_{\sigma }}(K)|}\int_{P_{E_{\sigma }}(K)}|K(y)|^{\frac{r-s}{r}}dy\right ]^r
&=  \left [{n-\frac{sd}{r}\choose n-d}\frac{1}{|P_{E_{\sigma }}(K)|}\int_{P_{E_{\sigma }}(K)}\phi (y)^{\frac{(r-s)d}{r}}\,dy\right ]^r\\
\nonumber &\gr  \left [{n\choose d}\frac{1}{|P_{E_{\sigma }}(K)|}\int_{P_{E_{\sigma }}(K)}\phi (y)^d\,dy\right ]^{r-s}\\
\nonumber &= \left [{n\choose d}\frac{1}{|P_{E_{\sigma }}(K)|}\,|K|\right ]^{r-s}.
\end{align}
It follows that
\begin{equation}\label{eq:berwald-7}\left (\int_{P_{E_{\sigma }}(K)}|K(y)|^{\frac{r-s}{r}}dy\right )^r\gr {n\choose d}^{r-s}{n-\frac{sd}{r}\choose n-d}^{-r}|P_{E_{\sigma }}(K)|^{s}\,|K|^{r-s},\end{equation}
and the result follows from \eqref{eq:berwald-5}. $\hfill\Box $

\begin{remark}\rm If the sets $\sigma_1,\ldots ,\sigma_r$ have the same cardinality $k$ then $k=\frac{sd}{r}$ and the result takes the form
\begin{equation}\label{eq:berwald-8}\prod_{i=1}^r|P_{E_{\sigma_i }}(K)|\gr {n\choose d}^{r-s}{n-k\choose n-d}^{-r}|P_{E_{\sigma }}(K)|^{s}\,|K|^{r-s}.\end{equation}
In order to get a feeling of the estimates, let us consider the case of two orthogonal coordinate subspaces $F_1,F_2\in G_{n,k}$, where $k<n/2$.
Then, $r=2$, $s=1$ and $d=2k$. Therefore,
\begin{equation}\gamma (n,2k,1,2)={n\choose 2k}{n-k\choose k}^{-2}\gr c_1^k\end{equation}
for some absolute constant $c_1>0$. So, we get:
\end{remark}

\begin{corollary}\label{cor:intro-1}Let $k<n/2$ and let $F_1,F_2\in G_{n,k}$ be two orthogonal coordinate subspaces.
For every convex body $K$ in ${\mathbb R}^n$ we have
\begin{equation}\label{eq:berwald-9}|P_{F_1^{\perp }\cap F_2^{\perp }}(K)|\,|K|\ls c^k|P_{F_1^{\perp }}(K)||P_{F_2^{\perp }}(K)|,\end{equation}
where $c>0$ is an absolute constant.
\end{corollary}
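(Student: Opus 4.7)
The plan is to deduce Corollary \ref{cor:intro-1} as an immediate specialization of Theorem \ref{th:intro-1}, followed by a short arithmetic estimate on the constant $\gamma(n,2k,1,2)$.

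First I would translate the geometric data into the combinatorial language of Theorem \ref{th:intro-1}. Two orthogonal coordinate subspaces $F_1,F_2\in G_{n,k}$ must come from two disjoint subsets $\sigma_1,\sigma_2\subseteq[n]$ with $|\sigma_1|=|\sigma_2|=k$ (if some $i\in\sigma_1\cap\sigma_2$ existed, then $e_i\in F_1\cap F_2$ would violate orthogonality). Setting $\sigma=\sigma_1\cup\sigma_2$, the hypothesis $k<n/2$ gives $d:=|\sigma|=2k<n$, and the disjointness means every $j\in\sigma$ belongs to exactly one of the $\sigma_i$; hence $(\sigma_1,\sigma_2)$ is a $1$-uniform cover of $\sigma$. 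Since $F_1\perp F_2$, we have $E_\sigma=F_\sigma^\perp=(F_1+F_2)^\perp=F_1^\perp\cap F_2^\perp$ and $E_{\sigma_i}=F_i^\perp$. Applying Theorem \ref{th:intro-1} with $r=2$, $s=1$, $d=2k$ would then yield
\[ |P_{F_1^\perp}(K)|\,|P_{F_2^\perp}(K)|\gr \gamma(n,2k,1,2)\,|P_{F_1^\perp\cap F_2^\perp}(K)|\,|K|. \]

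The remaining task is to exhibit an absolute constant $c_1>0$ with $\gamma(n,2k,1,2)\gr c_1^k$, for then the corollary follows with $c=1/c_1$. From \eqref{eq:intro-9}, $\gamma(n,2k,1,2)=\binom{n}{2k}\binom{n-k}{k}^{-2}$, and I would rewrite this in the form
\[ \gamma(n,2k,1,2)=\frac{1}{\binom{2k}{k}}\cdot\frac{n!\,(n-2k)!}{((n-k)!)^2}=\frac{1}{\binom{2k}{k}}\prod_{j=0}^{k-1}\frac{n-j}{n-k-j}. \]
Each factor in the product is $\gr 1$ (since $n-j\gr n-k-j$), and the classical bound $\binom{2k}{k}\ls 4^k$ gives $\gamma(n,2k,1,2)\gr 4^{-k}$. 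Substituting back and rearranging yields the corollary with $c=4$.

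There is no real obstacle to anticipate: Theorem \ref{th:intro-1} supplies the inequality, and the only point requiring care is confirming that $\gamma(n,2k,1,2)$ admits a lower bound depending only on $k$. The product expression above makes this transparent, as each of its $k$ factors is pinched in $[1,n/(n-2k+1)]$ and in particular is bounded below by $1$ uniformly in $n$.
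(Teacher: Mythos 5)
Your proof is correct and follows the same route as the paper: Corollary \ref{cor:intro-1} is exactly the specialization of Theorem \ref{th:intro-1} with $r=2$, $s=1$, $d=2k$, together with the bound $\gamma(n,2k,1,2)=\binom{n}{2k}\binom{n-k}{k}^{-2}\gr c_1^k$ stated in the preceding remark. Your explicit computation, rewriting $\gamma(n,2k,1,2)$ as $\binom{2k}{k}^{-1}\prod_{j=0}^{k-1}\frac{n-j}{n-k-j}\gr 4^{-k}$, correctly supplies the detail the paper leaves implicit and even yields the concrete constant $c=4$.
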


\section{Restricted dual Loomis-Whitney inequalities}

Let $K$ be a centered convex body of volume $1$ in ${\mathbb R}^n$. Recall that, for
every $p\gr  1$, the $L_p$-centroid body $Z_p(K)$ of $K$ is the symmetric convex body with support function
\begin{equation}\label{eq:sections-1}h_{Z_p(K)}(y)=\|\langle\cdot ,y\rangle\|_{L^p(K)}
=\left (\int_K|\langle x,y\rangle |^pdx\right )^{1/p}.
\end{equation}
The $L_p$-centroid bodies of a convex body were introduced by Lutwak and Zhang. Their systematic study from an asymptotic
point of view started with the works of Paouris \cite{Paouris-2006} and \cite{Paouris-2012}. In particular, the inequality
\eqref{eq:sections-3} below, which is essential for our argument, comes from \cite{Paouris-2012}.
We will use the next basic facts about the family $\{ Z_p(K)\}_{p\gr 1}$; see \cite[Chapter 5]{BGVV-book} for the proofs.

\begin{lemma}[$L_p$-centroid bodies]\label{lem:BGVV}There exist absolute constants $c_i>0$ such that, for every centered convex body $K$ of volume $1$ in ${\mathbb R}^n$,
for every $1\ls k\ls n-1$, $q>p\gr 1$ and $F\in G_{n,k}$, we have
\begin{equation}\label{eq:sections-2}Z_q(K)\subseteq \frac{c_1q}{p}Z_p(K)\end{equation}
and
\begin{equation}\label{eq:sections-3}
c_2\ls |K\cap F^{\perp}|^{\frac{1}{k}}|P_{F}(Z_k(K))|^{\frac{1}{k}}
\ls c_3.\end{equation}
Moreover, if $p\gr n$ then we have that
\begin{equation}\label{eq:sections-4}Z_p(K)\supseteq c_4Z_{\infty }(K),\end{equation}
where $Z_{\infty }(K)={\rm conv}\{K,-K\}$.\end{lemma}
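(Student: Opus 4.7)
The three assertions of Lemma \ref{lem:BGVV} are independent; each follows from the log-concavity of the uniform density $\mathbf{1}_K/|K|$ together with one of the standard tools of $L_p$-Brunn--Minkowski theory. I would treat them in turn: the first reduces to a one-dimensional moment comparison, the second to Paouris's Ball-body argument, and the third to a direct use of Brunn's concavity principle.

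For \eqref{eq:sections-2}, fix $y\neq 0$ and let $g(t)=|K\cap\{x:\langle x,y\rangle =t\}|$; by Brunn--Minkowski this is a log-concave density on ${\mathbb R}$, and $h_{Z_p(K)}(y)^p=\int_{\mathbb R}|t|^pg(t)\,dt$. A classical Borell-type reverse H\"older inequality for log-concave densities on the half-line asserts that $(\int t^q g)^{1/q}\ls \frac{cq}{p}(\int t^p g)^{1/p}$ for all $q>p\gr 1$. This may be extracted directly from Berwald's Lemma \ref{lem:berwald}, after the standard device that replaces $g$ on $[0,\infty)$ by a density of the form $\phi(t)^{n-1}$ with $\phi$ concave and with the same moments up to the relevant order. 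Applying the inequality pointwise in $y$ and taking roots gives $h_{Z_q(K)}(y)\ls \frac{c_1 q}{p}h_{Z_p(K)}(y)$, which is \eqref{eq:sections-2}.

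For \eqref{eq:sections-3}, the starting observation is the identification $P_F(Z_k(K))=Z_k(\pi_F)$, where $\pi_F(y)=|K\cap (y+F^{\perp})|$, $y\in F$, is the log-concave marginal density of $K$ onto $F$. This is immediate from Fubini: for $y\in F$ one has $h_{Z_k(K)}(y)^k=\int_F|\langle z,y\rangle|^k\pi_F(z)\,dz$, so the support functions match. I would then invoke Ball's convex body $K_k(\pi_F)=\{x\in F:\int_0^{\infty}kr^{k-1}\pi_F(rx)\,dr\gr \pi_F(0)\}$: a polar-coordinate computation shows $|K_k(\pi_F)|\simeq \pi_F(0)^{-1}$, while Borell's reverse H\"older inequality applied in the radial direction shows that $Z_k(\pi_F)$ is comparable to the polar $K_k(\pi_F)^{\circ}$ up to absolute constants. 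Combining these with $\pi_F(0)=|K\cap F^{\perp}|$ and the Ball-body volume-product identity $|K_k(\pi_F)|^{1/k}|K_k(\pi_F)^{\circ}|^{1/k}\simeq 1$ yields both sides of \eqref{eq:sections-3}.

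For \eqref{eq:sections-4}, fix $\xi\in S^{n-1}$, set $M=h_K(\xi)$, and recall that since $K$ is centered $h_K(-\xi)\ls nM$. By Brunn's principle the function $\tau\mapsto f(\tau)=|K\cap\{\langle x,\xi\rangle =\tau\}|^{1/(n-1)}$ is concave on its support, and a standard cone-type estimate based on this concavity gives $|K\cap\{\langle x,\xi\rangle \gr M/2\}|\gr c^{n}$ for a universal $c\in(0,1)$. Hence $h_{Z_p(K)}(\xi)^p\gr (M/2)^p\,c^{n}$, and for $p\gr n$ the factor $c^{n/p}$ is bounded below by an absolute constant, yielding $h_{Z_p(K)}(\xi)\gr c'h_K(\xi)$; symmetry of $Z_p(K)$ then gives $h_{Z_p(K)}(\xi)\gr c'\max(h_K(\xi),h_K(-\xi))=c'h_{Z_{\infty}(K)}(\xi)$, proving \eqref{eq:sections-4}. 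The main obstacle is \eqref{eq:sections-3}: while the identification $P_F(Z_k(K))=Z_k(\pi_F)$ is an immediate application of Fubini, the ensuing comparison $|Z_k(\pi_F)|^{1/k}|K\cap F^{\perp}|^{1/k}\simeq 1$ is the heart of Paouris's small-ball estimate and requires careful polar-coordinate bookkeeping with Ball's bodies; the other two items are routine consequences of one-dimensional log-concavity and Brunn's principle.
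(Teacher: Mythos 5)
The paper does not actually prove Lemma \ref{lem:BGVV}; it quotes these facts from \cite[Chapter 5]{BGVV-book} and \cite{Paouris-2012}, so your attempt has to be judged against the standard proofs there. Your treatments of \eqref{eq:sections-2} and \eqref{eq:sections-4} are essentially those proofs and are fine: for \eqref{eq:sections-2} one can even avoid the reduction of a log-concave density to a power of a concave function by applying Berwald's Lemma \ref{lem:berwald} directly on the two halves $K\cap\{\pm\langle x,y\rangle\gr 0\}$ (each has volume at least $1/e$ by Gr\"{u}nbaum, and $\binom{n+q}{n}^{1/q}/\binom{n+p}{n}^{1/p}$ produces the factor $q/p$); for \eqref{eq:sections-4} your cone/dilation argument gives $|K\cap\{\langle x,\xi\rangle\gr h_K(\xi)/2\}|\gr 2^{-n}/e$ and the symmetry of $Z_p(K)$ finishes it. The first step of \eqref{eq:sections-3}, namely $P_F(Z_k(K))=Z_k(\pi_F)$ via Fubini, is also correct and is the right starting point.

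However, the core of your argument for \eqref{eq:sections-3} contains a false claim: $Z_k(\pi_F)$ is \emph{not} comparable to $K_k(\pi_F)^{\circ}$ up to absolute constants. Test it on $f=\mathbf{1}_{RB_2^k}$ with $R=\omega_k^{-1/k}\simeq\sqrt{k}$: then $K_k(f)=RB_2^k$, so $K_k(f)^{\circ}=R^{-1}B_2^k$, whereas $Z_k(f)\simeq {\rm conv}(RB_2^k,-RB_2^k)=RB_2^k$; the two bodies differ by the factor $R^2\simeq k$, which is not absolute even after taking $k$-th roots of volumes. Moreover, even granting the comparison, your bookkeeping does not close: the volume product satisfies $|L|^{1/k}|L^{\circ}|^{1/k}\simeq 1/k$ (not $\simeq 1$), and chasing your identities gives $|Z_k(\pi_F)|^{1/k}\pi_F(0)^{1/k}\simeq \pi_F(0)^{2/k}/k$ rather than a two-sided absolute constant. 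The correct route (Paouris; \cite[Proposition 5.1.15]{BGVV-book}) involves no polarity and no Santal\'{o}/Bourgain--Milman: integration in polar coordinates gives the exact identity $Z_p(f)=f(0)^{1/p}Z_p\big(\mathbf{1}_{K_{k+p}(f)}\big)$ for a log-concave density $f$ on $F\simeq{\mathbb R}^k$; one then uses the standard two-sided comparison of the Ball bodies $K_k(f)$ and $K_{2k}(f)$ together with $|K_k(f)|=1/f(0)$, and finally the comparison $Z_k(\mathbf{1}_L)\simeq^{c^k}|L|^{1/k}\,\overline{{\rm conv}(L,-L)}$ for a $k$-dimensional convex body $L$, which follows from \eqref{eq:sections-2} and \eqref{eq:sections-4} applied in dimension $k$ with $p=k$. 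You should replace the ``$Z_k\simeq K_k^{\circ}$ plus volume product'' step by this identity; as written, that step is a genuine gap.
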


Besides \eqref{eq:sections-2} and \eqref{eq:sections-3} we will need the following: For every centered convex body $K$ of volume $1$ in ${\mathbb R}^n$,
for every $p\gr 1$ and every $u\in S^{n-1}$.
\begin{equation}\label{eq:sections-5}c_5h_{Z_p(K)}(u)\ls \frac{1}{|K\cap u^{\perp }|}\ls c_6ph_{Z_p(K)}(u),\end{equation}
where $c_5,c_6>0$ are absolute constants.

\smallskip

We start with the proof of \eqref{eq:intro-6}. This is a simple case of the general inequality of Theorem \ref{th:intro-2}, which
illustrates the main ideas behind its proof.

\begin{theorem}\label{th:sections}Let $K$ be a centered convex body in ${\mathbb R}^n$ and let $u,v$ be orthogonal unit vectors in ${\mathbb R}^n$.
If $E_{uv}=[{\rm span}\{u,v\}]^{\perp }$ then
\begin{equation}\label{eq:sections-6}|K\cap u^{\perp }|\,|K\cap v^{\perp }|\ls c|K\cap E_{uv}|\,|K|,\end{equation}
where $c>0$ is an absolute constant.
\end{theorem}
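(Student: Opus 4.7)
\medskip\noindent\textbf{Proof plan.} By the scaling invariance of the inequality (both sides are homogeneous of degree $2n-2$ in $K$), we may assume $|K|=1$. The strategy is the one outlined in the introduction: translate each section volume into a quantity involving a suitable $L_p$-centroid body of $K$, at which point the two-dimensional structure of the problem lets us finish by an elementary support-function estimate in the plane $F=\mathrm{span}\{u,v\}$.

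First I would rewrite all three section volumes in terms of centroid bodies. Applying \eqref{eq:sections-3} with $k=1$ and the one-dimensional coordinate subspaces spanned by $u$ and $v$ (and using that $Z_1(K)$ is symmetric, so $|P_{\mathrm{span}\{u\}}(Z_1(K))|=2h_{Z_1(K)}(u)$, and similarly for $v$) yields
\begin{equation*}
|K\cap u^{\perp}|\,|K\cap v^{\perp}|\;\simeq\;\frac{1}{h_{Z_1(K)}(u)\,h_{Z_1(K)}(v)}.
\end{equation*}
Applying \eqref{eq:sections-3} with $k=2$ and $F=\mathrm{span}\{u,v\}$ gives $|K\cap E_{uv}|\simeq 1/|P_F(Z_2(K))|$. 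Therefore the claimed inequality $|K\cap u^{\perp}|\,|K\cap v^{\perp}|\ls c\,|K\cap E_{uv}|$ is equivalent, up to absolute constants, to
\begin{equation*}
|P_F(Z_2(K))|\;\ls\; c'\, h_{Z_1(K)}(u)\,h_{Z_1(K)}(v).
\end{equation*}

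To establish this I would first pass from $Z_2(K)$ to $Z_1(K)$ by the inclusion $Z_2(K)\subseteq 2c_1 Z_1(K)$ from \eqref{eq:sections-2}, which gives $|P_F(Z_2(K))|\ls (2c_1)^2|P_F(Z_1(K))|$. Now $P_F(Z_1(K))$ is a symmetric convex body in the two-dimensional plane $F$, and the orthogonal unit vectors $u,v$ span $F$. Since for any symmetric $L\subset F$ one has $L\subseteq [-h_L(u),h_L(u)]\,u+[-h_L(v),h_L(v)]\,v$, the trivial two-dimensional Loomis--Whitney estimate gives
\begin{equation*}
|P_F(Z_1(K))|\;\ls\; 4\,h_{Z_1(K)}(u)\,h_{Z_1(K)}(v),
\end{equation*}
using $h_{P_F(Z_1(K))}(w)=h_{Z_1(K)}(w)$ for $w\in F$. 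Combining the three displayed inequalities produces the desired bound on $|P_F(Z_2(K))|$, and hence the theorem.

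The substantive step is the duality in the first paragraph: once one recognizes that section volumes of $K$ are controlled, via \eqref{eq:sections-3}, by projection volumes of the centroid bodies $Z_k(K)$, the problem of bounding a product of section volumes by another section volume becomes a problem of bounding a coordinate projection volume of $Z_2(K)$ by products of one-dimensional widths of $Z_1(K)$, which is just a planar Loomis--Whitney estimate. I expect the only real care to be needed in tracking the parameter $p$ and the comparison constants $c_1,c_2,c_3$, and in checking the homogeneity reduction to $|K|=1$; no nontrivial obstacle arises beyond this. The same template, with the Bollob\'as--Thomason inequality \eqref{eq:intro-5} replacing the planar estimate, is what will give the full Theorem \ref{th:intro-2}.
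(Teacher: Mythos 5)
Your argument is correct and is essentially the paper's own proof: reduce to $|K|=1$, convert the section volumes to centroid-body data via \eqref{eq:sections-3} (the paper uses \eqref{eq:sections-5} for the hyperplane sections, which is equivalent to your $k=1$ case), compare $Z_2(K)$ with $Z_1(K)$ through \eqref{eq:sections-2}, and finish with the trivial planar bound $|C|\ls 4h_C(u)h_C(v)$ applied to the projection onto $F=\mathrm{span}\{u,v\}$. The only cosmetic difference is that the paper applies the box estimate directly to $P_F(Z_2(K))$ and then compares support functions, while you pass to $P_F(Z_1(K))$ first; the constants work out the same way.
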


\begin{proof}By homogeneity we may assume that $|K|=1$. Using \eqref{eq:sections-3} with $F=E_{uv}^{\perp }={\rm span}\{u,v\}$ we see that
\begin{equation}\label{eq:sections-7}|K\cap E_{uv}|\gr \frac{c_7}{|P_F(Z_2(K))|}.\end{equation}
From \eqref{eq:sections-4} we have
\begin{equation}\label{eq:sections-8}|K\cap u^{\perp }|\,|K\cap v^{\perp }|\ls c_8\,[h_{Z_1(K)}(u)h_{Z_1(K)}(v)]^{-1}.\end{equation}
From \eqref{eq:sections-2} we also have
\begin{equation}\label{eq:sections-9}h_{Z_1(K)}(u)\gr c_{10}h_{Z_2(K)}(u)=c_{10}h_{P_F(Z_2(K))}(u)\quad\hbox{and}\quad
h_{Z_1(K)}(v)\gr c_{10}h_{Z_2(K)}(v)=c_{10}h_{P_F(Z_2(K))}(v),\end{equation}
where the two equalities hold because $u,v\in F$. If we consider the two-dimensional origin symmetric ellipsoid $C=P_F(Z_2(K))$ it is clear
(by the Loomis-Whitney inequality in the plane) that
\begin{equation}\label{eq:sections-10}|C|\ls 4h_C(u)h_C(v),\end{equation}
and this shows that
\begin{align}\label{eq:sections-11}c_7|K\cap E_{uv}|^{-1} &\ls |P_F(Z_2(K))|\ls 4h_{P_F(Z_2(K))}(u)h_{P_F(Z_2(K))}(v)\ls 4c_{10}^{-2}h_{Z_1(K)}(u)h_{Z_1(K)}(v)\\
\nonumber &\ls 4c_8c_{10}^{-2}\big (|K\cap u^{\perp }|\,|K\cap v^{\perp }|\big )^{-1}.
\end{align}
This completes the proof. \end{proof}

\medskip

\noindent {\bf Proof of Theorem \ref{th:intro-2}.} Let $r>s\gr 1$, let $\sigma\subseteq [n]$ with cardinality $|\sigma |=d<n$ and let
$(\sigma_1,\ldots ,\sigma_r)$ be an $s$-uniform cover of $\sigma $. Note that if $|\sigma_i|=d_i$ then $ds=d_1+\cdots +d_r$.

By homogeneity we may assume that $|K|=1$. Starting from \eqref{eq:sections-3} we may write
\begin{equation}\label{eq:sections-12}c_2\ls |K\cap E_{\sigma_i}|^{\frac{1}{d_i}}|P_{F_{\sigma_i}}(Z_{d_i}(K))|^{\frac{1}{d_i}}\ls c_3\end{equation}
for all $i$, and hence,
\begin{equation}\label{eq:sections-13}\prod_{i=1}^r|K\cap E_{\sigma_i}|\ls c_3^{d_1+\cdots +d_r}\prod_{i=1}^r|P_{F_{\sigma_i}}(Z_{d_i}(K))|^{-1}
=c_3^{ds}\prod_{i=1}^r|P_{F_{\sigma_i}}(Z_{d_i}(K))|^{-1},\end{equation}
so we need a lower bound for the product
\begin{equation}\label{eq:sections-14}\prod_{i=1}^r|P_{F_{\sigma_i}}(Z_{d_i}(K))|.\end{equation}
From \eqref{eq:sections-2} we have
\begin{equation}\label{eq:sections-15}Z_d(K)\subseteq \frac{c_1d}{d_i}\,Z_{d_i}(K)\end{equation}
for all $i=1,\ldots ,r$, which gives
\begin{equation}\label{eq:sections-16}\prod_{i=1}^r|P_{F_{\sigma_i}}(Z_{d}(K))|\ls \prod_{i=1}^r\left (\frac{c_1d}{d_i}\right )^{d_i}\prod_{i=1}^r|P_{F_{\sigma_i}}(Z_{d_i}(K))|= \frac{(c_1d)^{ds}}{d_1^{d_1}\cdots d_r^{d_r}}\prod_{i=1}^r|P_{F_{\sigma_i}}(Z_{d_i}(K))|.\end{equation}
Now, since $(\sigma_1,\ldots ,\sigma_r)$ is an $s$-uniform cover of $\sigma $, applying the uniform cover inequality of Bollob\'{a}s
and Thomason to the convex body $P_{F_{\sigma }}(Z_d(K))$ we get
\begin{equation}\label{eq:sections-17}|P_{F_{\sigma }}(Z_d(K))|^s\ls \prod_{i=1}^r|P_{F_{\sigma_i}}(Z_d(K))|.\end{equation}
Next, using again \eqref{eq:sections-3}, we see that
\begin{equation}\label{eq:sections-18}|P_{F_{\sigma }}(Z_d(K))|^s\gr c_2^{ds}|K\cap E_{\sigma }|^{-s}.\end{equation}
Combining the above, we have
\begin{equation}\label{eq:sections-19}\prod_{i=1}^r|K\cap E_{\sigma_i}|\ls \frac{(c_1c_3d)^{ds}}{d_1^{d_1}\cdots d_r^{d_r}}|P_{F_{\sigma }}(Z_d(K))|^{-s}
\ls \frac{(c_0d)^{ds}}{d_1^{d_1}\cdots d_r^{d_r}}|K\cap E_{\sigma }|^s,\end{equation}
where $c_0=c_1c_3/c_2$, and the result follows. $\hfill\Box $

\medskip

In order to get a feeling of the estimates, let us consider the case of two orthogonal coordinate subspaces $F_1,F_2\in G_{n,k}$, where $k<n/2$.
Then, $r=2$, $s=1$ and $d=2k$. Therefore, we get:

\begin{corollary}\label{cor:intro-2}Let $k<n/2$ and let $F_1,F_2\in G_{n,k}$ be two orthogonal coordinate subspaces.
For every centered convex body $K$ in ${\mathbb R}^n$ we have
\begin{equation}\label{eq:sections-20}|K\cap F_1^{\perp }|\,|K\cap F_2^{\perp }|\ls c^k|K\cap F_1^{\perp }\cap F_2^{\perp }|\,|K|,\end{equation}
where $c>0$ is an absolute constant.
\end{corollary}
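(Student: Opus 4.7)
The plan is to deduce Corollary \ref{cor:intro-2} as a direct specialization of Theorem \ref{th:intro-2}, just as the paragraph preceding the corollary suggests. First I would set up the dictionary between the coordinate subspaces $F_1,F_2$ and the index subsets of Theorem \ref{th:intro-2}. Since $F_1,F_2\in G_{n,k}$ are orthogonal coordinate subspaces, there are disjoint sets $\sigma_1,\sigma_2\subseteq [n]$ with $|\sigma_1|=|\sigma_2|=k$ such that $F_i=F_{\sigma_i}$, and hence $F_i^{\perp}=E_{\sigma_i}$. Put $\sigma=\sigma_1\cup\sigma_2$, so that $|\sigma|=d=2k$ and $E_{\sigma}=F_{\sigma_1\cup\sigma_2}^{\perp}=F_1^{\perp}\cap F_2^{\perp}$. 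The assumption $k<n/2$ ensures $d<n$, as required. The pair $(\sigma_1,\sigma_2)$ is a $1$-uniform cover of $\sigma$, because every $j\in\sigma$ lies in exactly one of the two disjoint sets; thus $r=2$, $s=1$, $d_1=d_2=k$.

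Next I would substitute these parameters into the conclusion \eqref{eq:intro-12} of Theorem \ref{th:intro-2}. The left hand side becomes $|K\cap F_1^{\perp}|\,|K\cap F_2^{\perp}|$, the section term on the right becomes $|K\cap F_1^{\perp}\cap F_2^{\perp}|^{s}=|K\cap F_1^{\perp}\cap F_2^{\perp}|$, and the volume factor is $|K|^{r-s}=|K|$. It remains to simplify the explicit constant, and this is the only calculation to perform:
\begin{equation*}
\frac{(c_0 d)^{ds}}{d_1^{d_1}\,d_2^{d_2}}=\frac{(2c_0 k)^{2k}}{k^{k}\cdot k^{k}}=(2c_0)^{2k}=(4c_0^{2})^{k}.
\end{equation*}
Setting $c=4c_0^{2}$ we obtain exactly
\begin{equation*}
|K\cap F_1^{\perp}|\,|K\cap F_2^{\perp}|\ls c^{k}|K\cap F_1^{\perp}\cap F_2^{\perp}|\,|K|,
\end{equation*}
as claimed.

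There is no real obstacle here: the corollary is a worked example that illustrates the strength of Theorem \ref{th:intro-2} in the simplest non-trivial regime, and its proof is purely bookkeeping. The one point worth double-checking in the write-up is the cancellation in the constant, where the factor $k^{ds}$ coming from $d^{ds}=(2k)^{2k}$ precisely kills the denominator $d_1^{d_1}d_2^{d_2}=k^{2k}$, leaving a constant that depends only on $k$ exponentially through an absolute base. This is what makes the estimate meaningful: the bound $c^{k}$ is dimension-free in $n$, in sharp contrast with what a naive iteration of \eqref{eq:intro-11} would produce.
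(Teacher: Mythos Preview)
Your proposal is correct and follows exactly the approach of the paper: the corollary is stated as the specialization of Theorem \ref{th:intro-2} to $r=2$, $s=1$, $d=2k$, $d_1=d_2=k$, and you have carried out the constant computation that the paper leaves implicit. There is nothing to add.
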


\section{Inequalities about mixed volumes}

In this last section we prove Theorem \ref{th:intro-3} and we discuss the conjecture of Hug and Schneider in the case $r=2$; we provide
an affirmative answer, up to a factor $2$, in greater generality. The main source of our results is the next lemma which is an almost
immediate consequence of a lemma from \cite{Fradelizi-Giannopoulos-Meyer-2003} (a variant of it had been earlier proved
in \cite{Giannopoulos-Hartzoulaki-Paouris-2002}). We reproduce a sketch of its proof for completeness.

\begin{lemma}\label{lem:AF-1}
Let ${\cal C}=(K_3,\ldots ,K_n)$ be an $(n-2)$-tuple of $K_j\in {\cal K}_n$. If $A,B\in {\cal
K}_n$, we denote
$V(A,B,{\cal C})$ by $V(A,B)$. Then, for all $A,B,C\in {\cal K}_n$ we have
\begin{equation}V(A,A)V(B,C)\ls 2V(A,B)V(A,C).\end{equation}
\end{lemma}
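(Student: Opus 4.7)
My plan is to derive the inequality by applying the Alexandrov--Fenchel inequality to a one-parameter family of Minkowski combinations, expanding, and then optimizing in the parameter.

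\textbf{Setup.} For every $t\gr 0$, apply the Alexandrov--Fenchel inequality \eqref{eq:not-4} (with the fixed $(n-2)$-tuple $\mathcal{C}$) to the pair $(A,\,B+tC)$. Using the bilinearity of $V(\cdot,\cdot)$ this yields
\begin{equation*}
V(A,A)\bigl[V(B,B)+2tV(B,C)+t^2V(C,C)\bigr]\ls \bigl[V(A,B)+tV(A,C)\bigr]^2.
\end{equation*}
Expanding and isolating the $t$-linear term on the left gives, for every $t>0$,
\begin{equation*}
2V(A,A)V(B,C)\ls \frac{V(A,B)^2-V(A,A)V(B,B)}{t}+2V(A,B)V(A,C)+t\bigl[V(A,C)^2-V(A,A)V(C,C)\bigr].
\end{equation*}

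\textbf{Optimization.} The two bracketed coefficients of $1/t$ and $t$ are both nonnegative, again by the Alexandrov--Fenchel inequality \eqref{eq:not-4}. Setting
\begin{equation*}
a:=V(A,B)^2-V(A,A)V(B,B),\qquad b:=V(A,C)^2-V(A,A)V(C,C),
\end{equation*}
minimizing the right-hand side over $t>0$ via the AM--GM inequality (the case where $a=0$ or $b=0$ is handled by letting $t\to 0^{+}$ or $t\to\infty$) gives
\begin{equation*}
2V(A,A)V(B,C)\ls 2V(A,B)V(A,C)+2\sqrt{ab}.
\end{equation*}

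\textbf{Finish.} Finally, the trivial bounds $a\ls V(A,B)^2$ and $b\ls V(A,C)^2$ yield $\sqrt{ab}\ls V(A,B)V(A,C)$, so
\begin{equation*}
2V(A,A)V(B,C)\ls 4V(A,B)V(A,C),
\end{equation*}
which is the desired inequality after dividing by~$2$. The only step that requires a little care is the optimization when one of $a,b$ vanishes, but this degenerate case is immediate by taking a one-sided limit in~$t$; apart from that, the argument is a direct consequence of two applications of Alexandrov--Fenchel together with AM--GM.
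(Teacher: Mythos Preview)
Your proof is correct. Both arguments ultimately rest on Alexandrov--Fenchel, but you take a genuinely different and somewhat more streamlined route than the paper. The paper applies Alexandrov--Fenchel twice, to the pairs $(B+tA,\,C+sA)$ and $(sB+tC,\,A)$, expands each to get two quadratic forms in $(t,s)$, and then argues via a discriminant comparison to reach $|V(B,A)V(C,A)-V(B,C)V(A,A)|\ls V(B,A)V(C,A)$. You instead use a single one-parameter family $(A,\,B+tC)$, rearrange into the form $a/t+tb$, and minimize by AM--GM (with two further ``static'' applications of Alexandrov--Fenchel to ensure $a,b\gr 0$). Your argument is shorter and avoids the case distinction in the discriminant analysis; the paper's route, on the other hand, actually yields the sharper two-sided bound $|V(A,B)V(A,C)-V(A,A)V(B,C)|\ls V(A,B)V(A,C)$, of which the lemma is one half.
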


\begin{proof}By the Aleksandrov-Fenchel inequality, for all $t,s\gr 0$ we
have
\begin{equation}V(B+tA,C+sA)^2-V(B+tA,B+tA)V(C+sA,C+sA)\gr 0\end{equation}
and
\begin{equation}V(sB+tC, A)^2-V(sB+tC,sB+tC)V(A, A)\gr 0.\end{equation}
Using the linearity of mixed volumes, from the first inequality we arrive at
\begin{align}
0 &\ls  g(t,s)+ t^2\left ( V(C,A)^2-V(A,A)V(C,C)\right )+s^2\left ( V(B,A)^2-V(A,A)V(B,B)\right )\\
\nonumber &+2ts\left (V(B,C)V(A,A)-V(B,A)V(C,A)\right ),
\end{align}
where $g$ is a linear function of $t$ and $s$. It follows that the quadratic term is
non-negative and hence, either $V(B,C)V(A,A)>V(B,A)V(C,A)$ or its discriminant
\begin{equation}
\left ( V(B,A)V(C,A)-V(B,C)V(A,A)\right )^2 - [V(B,A)^2-V(A,A)V(B,B)]\,[V(C,A)^2-V(A,A)V(C,C)]
\end{equation}
is non-positive. Working in the same way with the second inequality, we arrive at
\begin{align}
0 &\ls  t^2(V(C,A)^2-V(A,A)V(C,C))+s^2(V(B,A)^2-V(A,A)V(B,B))\\
\nonumber &+2ts(V(B,A)V(C,A)-V(B,C)V(A,A)).
\end{align}
This shows that if $V(B,C)V(A,A)>V(B,A)V(C,A)$ then the discriminant
of this second quadratic form (which is the same as before) is non-positive. It follows that, in both cases,
\begin{align}\left ( V(B,A)V(C,A)-V(B,C)V(A,A)\right )^2 &\ls [V(B,A)^2-V(A,A)V(B,B)]\,[V(C,A)^2-V(A,A)V(C,C)]\\
\nonumber &\ls V(B,A)^2V(C,A)^2.
\end{align}
Therefore,
\begin{equation}|V(B,A)V(C,A)-V(B,C)V(A,A)|\ls V(B,A)V(C,A),\end{equation}
and the lemma immediately follows.
\end{proof}

\medskip

We start with the proof of Theorem \ref{th:intro-3}. For any $u\in S^{n-1}$ we write $L_u$ for the line segment $[0,u]$.
Computing the volume of $K+tL_u$ we see that
\begin{equation}nV(K[n-1],L_u)=|P_E(K)|\end{equation} for every $K\in {\cal K}_n$, where $E=u^{\perp }$.
Linearity of mixed volumes shows that
\begin{equation}\label{eq:AF-1}nV(K_1,\ldots ,K_{n-1},L_u)=V_E(P_E(K_1),\ldots ,P_E(K_{n-1}))\end{equation}
for all $K_1,\ldots ,K_{n-1}\in {\mathcal K}_n$, where $V_E$ denotes mixed volume in $E$.
The next more general fact is due to Fedotov (see \cite{Burago-Zalgaller-book}).

\begin{lemma}\label{lem:AF-2}
Let $E\in G_{n,k}$ and $L_1,\ldots ,L_{n-k}$ be compact convex subsets of $E^{\perp }$.
If $K_1,\ldots ,K_k\in {\mathcal K}_n$, then
\begin{equation}{n\choose k}V(K_1,\ldots ,K_k,L_1,\ldots ,L_{n-k})=
V_{E}(P_E(K_1),\ldots ,P_E(K_k))V_{E^{\perp }}(L_1,\ldots ,L_{n-k}),\end{equation}
where $V_E,V_{E^{\perp }}$ denote mixed volumes on $E,E^{\perp }$ respectively.
\end{lemma}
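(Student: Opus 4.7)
The plan is to prove the identity in two stages: first reduce to a diagonal case by polarization, then expand $|K+tL|$ in two different ways and compare coefficients.

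Both sides of the claimed identity are symmetric and multilinear (with respect to Minkowski addition) in the $k$ arguments $K_1,\dots,K_k \in \mathcal{K}_n$ separately, and in the $n-k$ arguments $L_1,\dots,L_{n-k} \subseteq E^\perp$ separately. For the left side this is the standard multilinearity of mixed volumes on $\mathbb{R}^n$. For the right side, multilinearity in the $K_i$ follows from $P_E(\lambda K + \mu K') = \lambda P_E(K) + \mu P_E(K')$ combined with multilinearity of $V_E$, and multilinearity in the $L_j$ is just multilinearity of $V_{E^\perp}$. By polarization, it therefore suffices to establish
\begin{equation}\binom{n}{k}\,V(K[k],L[n-k]) = |P_E(K)|_E \cdot |L|_{E^\perp}\end{equation}
for an arbitrary $K \in \mathcal{K}_n$ and an arbitrary compact convex $L \subseteq E^\perp$.

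To prove this diagonal identity, I would compute $|K+tL|$ in two ways. On one hand, Minkowski's theorem in $\mathbb{R}^n$ gives
\begin{equation}|K+tL| = \sum_{j=0}^{n}\binom{n}{j}V(K[n-j],L[j])\,t^j.\end{equation}
On the other hand, since $L \subseteq E^\perp$, one has $P_E(K+tL)=P_E(K)$ for every $t\geqslant 0$, and for each $y\in P_E(K)$ the fiber $(K+tL)\cap(y+E^\perp)$ equals $(K\cap(y+E^\perp))+tL$. Writing $K_y := (K\cap(y+E^\perp))-y \subseteq E^\perp$ and applying Fubini,
\begin{equation}|K+tL| = \int_{P_E(K)} |K_y+tL|_{E^\perp}\,dy.\end{equation}
Now Minkowski's theorem applied in the $(n-k)$-dimensional space $E^\perp$ expands the integrand as a polynomial in $t$ of degree at most $n-k$, whose coefficient of $t^{n-k}$ is $V_{E^\perp}(L[n-k]) = |L|_{E^\perp}$.

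Comparing the coefficients of $t^{n-k}$ in the two polynomial expressions for $|K+tL|$, the left-hand expansion contributes $\binom{n}{n-k}V(K[k],L[n-k])$, while the right-hand Fubini expansion contributes $|P_E(K)|_E \cdot |L|_{E^\perp}$, which using $\binom{n}{n-k}=\binom{n}{k}$ yields exactly the desired diagonal identity. The main point to verify carefully is that the Fubini decomposition really identifies the fibers as translates of $K_y+tL$ in $E^\perp$; the polarization step is routine once multilinearity has been noted. Incidentally, this proof also shows that $V(K[n-j],L[j])=0$ for $j>n-k$, since the Fubini expansion is a polynomial of degree at most $n-k$, a consistency check reflecting that $L$ lives in a subspace of dimension $n-k$.
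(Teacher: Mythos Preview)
The paper does not actually prove Lemma~\ref{lem:AF-2}: it simply attributes the identity to Fedotov and cites Burago--Zalgaller. So there is no ``paper's own proof'' to match; your proposal supplies a complete, self-contained argument where the paper chose to quote.

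Your argument is correct. The two-step reduction is standard and sound: (i) both sides are symmetric and Minkowski-multilinear separately in the $K_i$'s and in the $L_j$'s, so by polarization it suffices to treat the diagonal $K_1=\cdots=K_k=K$, $L_1=\cdots=L_{n-k}=L$; (ii) the diagonal identity follows by reading off the coefficient of $t^{n-k}$ in $|K+tL|$ computed first by Minkowski's expansion in ${\mathbb R}^n$ and second by Fubini over $P_E(K)$. The fiber identification $(K+tL)\cap(y+E^{\perp})=(K\cap(y+E^{\perp}))+tL$ is exactly right: since $L\subseteq E^{\perp}$ one has $P_E(L)=\{0\}$, so $P_E(K+tL)=P_E(K)$, and writing $z=x+t\ell$ with $x\in K$, $\ell\in L$ forces $P_E(x)=P_E(z)=y$, giving both inclusions. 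Your closing remark that $V(K[n-j],L[j])=0$ for $j>n-k$ is a nice consistency check and follows immediately from the fact that the Fubini expansion has degree at most $n-k$ in $t$.

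One small point worth making explicit in a polished write-up: polarization of symmetric multilinear forms in the Minkowski setting is legitimate because $f(t_1K_1+\cdots+t_kK_k)$ is a genuine polynomial in $t_1,\ldots,t_k\gr 0$, whose coefficient of $t_1\cdots t_k$ recovers $k!\,F(K_1,\ldots,K_k)$; no subtractions of convex bodies are needed. You allude to this (``the polarization step is routine''), but since mixed volumes are only positively linear, a reader might appreciate one sentence spelling it out.
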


\noindent {\bf Proof of Theorem \ref{th:intro-3}.} We apply Lemma \ref{lem:AF-1} with ${\cal C}=(K,\ldots ,K)$, $A=K$, $B=L_u=[0,u]$ and $C=L_v=[0,v]$.
We have
\begin{equation}V(L_u,L_v)V(K,K)\ls 2V(K,L_u)V(K,L_v).\end{equation}
Next, applying Lemma \ref{lem:AF-2} with ${\cal C}=(K,\ldots ,K)$, $L_1=[0,u]$, $L_2=[0,v]$ and $E={\rm span}\{ e_s:s\neq i,j\}$,
and observing that $V_{E^{\perp }}(L_u,L_v)=\frac{1}{2}\sqrt{1-\langle u,v\rangle ^2}$, we see that
\begin{equation}
V(L_u,L_v) = V(K,\ldots ,K,L_u,L_v)=\frac{1}{2}\sqrt{1-\langle u,v\rangle ^2}{n\choose 2}^{-1}|P_{u,v}(K)|.\end{equation}
Taking into account \eqref{eq:AF-1} and the fact that $V(K,K)=|K|$ we conclude that
\begin{equation}\frac{1}{n(n-1)}\sqrt{1-\langle u,v\rangle ^2}|P_{u,v}(K)|\,|K|\ls \frac{2}{n^2}|P_u(K)|\,|P_v(K)|,\end{equation}
and the result follows. $\hfill\Box $

\begin{remark}\rm Applying Lemma \ref{lem:AF-1} with ${\cal C}=(B_2^n,\ldots ,B_2^n)$, $A=B_2^n$, $B=K_1$
and $C=K_2$ we immediately see that for any pair of convex bodies $K_1,K_2$ in ${\mathbb R}^n$ we have
\begin{equation}V(B_2^n,B_2^n,{\cal C})V(K_1,K_2,{\cal C})\ls 2V(K_1,B_2^n,{\cal C})V(K_2,B_2^n,{\cal C}),\end{equation}
or equivalently,
\begin{equation}\label{eq:ostrover}|B_2^n|\,V(K_1,K_2,B_2^n[n-2])\ls 2V(K,B_2^n[n-1])V(K_2,B_2^n[n-1]).\end{equation}
It was mentioned in the introduction that this confirms the case $r=2$ of a conjecture of Hug and Schneider, up to a factor $2$. Recall that
\begin{equation}V(K_i,B_2^n[n-1])=\omega_n\int_{S^{n-1}}h_{K_i}(u)\,d\sigma (u)\end{equation}
for $i=1,2$ and that (see e.g. \cite{Schneider-book})
\begin{equation}V(K_1,K_2,B_2^n[n-2])=\omega_n\int_{S^{n-1}}h_{K_1}(u)\left (h_{K_2}(u)+\frac{1}{n-1}\Delta_Sh_{K_2}(u)\right )\,d\sigma (u)\end{equation}
where $\Delta_S$ is the spherical Laplace operator on $S^{n-1}$, therefore \eqref{eq:ostrover} implies that for any pair of support functions we have
\begin{equation}\int_{S^{n-1}}h_{K_1}(u)\left (h_{K_2}(u)+\frac{1}{n-1}\Delta_Sh_{K_2}(u)\right )\,d\sigma (u)
\ls 2\int_{S^{n-1}}h_{K_1}(u)\,d\sigma (u)\int_{S^{n-1}}h_{K_2}(u)\,d\sigma (u).\end{equation}
\end{remark}

\begin{remark}\rm The next result of Soprunov and Zvavitch (see \cite[Theorem 5.7]{Soprunov-Zvavitch-2016}) was mentioned in the
introduction. Let $A$ be any convex body in ${\mathbb R}^n$ and let $(K_1,\ldots ,K_r)$ be any $r$-tuple of convex bodies in ${\mathbb R}^n$.
Then,
\begin{equation}\label{eq:SZ}|A|^{r-1}V(K_1,\ldots ,K_r,A[n-r])\ls c_{n,r}\prod_{i=1}^rV(K_i,A[n-1]),\end{equation}
for some constant $c_{n,r}\ls n^rr^{r-1}$. Moreover, if $K_1,\ldots ,K_r$ are origin symmetric one can have the
same inequality with a constant $c_{n,r}^{\prime }\ls n^{r/2}r^{r-1}$. Applying
\eqref{eq:intro-14} with ${\cal C}=(A,\ldots ,A)$ and $B=K_1$, $C=K_2$ we immediately see that if $r=2$ then we get \eqref{eq:SZ} in the form
\begin{equation}\label{eq:SZ-2}|A|\,V(K_1,K_2,A[n-2])\ls 2\prod_{i=1}^2V(K_i,A[n-1]).\end{equation}
This is exactly the statement of Theorem \ref{th:intro-4}. A simple inductive argument shows that if $r=3$ then one can get \eqref{eq:SZ} in the form
\begin{equation}\label{eq:SZ-3}|A|^2\,V(K_1,K_2,K_3,A[n-3])\ls 8\prod_{i=1}^3V(K_i,A[n-1]).\end{equation}
More generally, for every $r\gr 2$ there exists $c_r>0$ (depending only on $r$) such that,
for any $n>r$ and any $r$-tuple $(K_1,\ldots ,K_r)$ of convex bodies in ${\mathbb R}^n$,
\begin{equation}\label{eq:SZ-r}|A|^{r-1}V(K_1,\ldots ,K_r,A[n-r])\ls c_r\prod_{i=1}^rV(K_i,A[n-1]).\end{equation}
Induction shows that \eqref{eq:SZ-r} holds true with $c_r\ls 2^{2^{r-1}-1}$.

Let us finally mention that Soprunov and Zvavitch have observed in \cite{Soprunov-Zvavitch-2016} that if $A=\Delta $ is an $n$-dimensional simplex
then \eqref{eq:SZ-r} holds true with constant $1$, and they conjecture that if a convex body $A$ in ${\mathbb R}^n$
satisfies \eqref{eq:SZ-r} with constant $1$ for all $r$ and all $K_1,\ldots ,K_r\in {\cal K}_n$ then $A$ must be an
$n$-dimensional simplex. In \cite{Saroglou-Soprunov-Zvavitch-2016} this conjecture is confirmed under the additional
hypothesis that $A$ is a polytope.
\end{remark}

\footnotesize
\bibliographystyle{amsplain}

\bigskip

\medskip

\thanks{\noindent {\bf Keywords:}  Convex bodies, volume of projections and sections, Loomis-Whitney inequality, uniform cover inequality.}

\smallskip

\thanks{\noindent {\bf 2010 MSC:} Primary 52A20; Secondary 52A23, 52A40, 46B06.}

\bigskip

\bigskip

\noindent \textsc{Silouanos \ Brazitikos}: Department of
Mathematics, National and Kapodistrian University of Athens, Panepistimiopolis 157-84,
Athens, Greece.

\smallskip

\noindent \textit{E-mail:} \texttt{silouanb@math.uoa.gr}

\bigskip

\noindent \textsc{Apostolos \ Giannopoulos}: Department of
Mathematics, National and Kapodistrian University of Athens, Panepistimiopolis 157-84,
Athens, Greece.

\smallskip

\noindent \textit{E-mail:} \texttt{apgiannop@math.uoa.gr}

\bigskip

\noindent \textsc{Dimitris-Marios \ Liakopoulos}: Department of
Mathematics, National and Kapodistrian University of Athens, Panepistimiopolis 157-84,
Athens, Greece.

\smallskip

\noindent \textit{E-mail:} \texttt{dimliako1@gmail.com}

\end{document}